\newcommand{\beq}{\begin{equation}}
\newcommand{\eeq}{\end{equation}}
\newcommand{\bea}{\begin{eqnarray}}
\newcommand{\eea}{\end{eqnarray}}
\newcommand{\beas}{\begin{eqnarray*}}
\newcommand{\eeas}{\end{eqnarray*}}
\newcommand{\para}{\mathbin{\!/\mkern-5mu/\!}}
\newtheorem{theorem}{Theorem}[section]
\newtheorem{definition}[theorem]{Definition}
\newtheorem{proposition}[theorem]{Proposition}
\newtheorem{lemma}[theorem]{Lemma}
\newtheorem{remark}[theorem]{Remark}
\newtheorem{example}[theorem]{Example}
\newtheorem{examples}[theorem]{Examples}
\newtheorem{foo}[theorem]{Remarks}
\newenvironment{proof}{\addvspace{\medskipamount}\par\noindent{\it
Proof}.}
{\unskip\nobreak\hfill$\Box$\par\addvspace{\medskipamount}}
\newcommand{\ee}{\ell}
\newcommand{\bM}{\mathbb M}
\newcommand{\Ho}{\mathcal H}
\newcommand{\V}{\mathcal V}
\newcommand{\di}{\mathfrak h}
\newcommand{\M}{\mathbb M}
\title{The Lichnerowicz-Obata theorem  on sub-Riemannian manifolds with transverse symmetries}
\author{Fabrice Baudoin, Bumsik Kim}
\date{Department of Mathematics, Purdue University \\
 West Lafayette, IN, USA}
\begin{document}
\maketitle

\begin{abstract}
 We prove a lower bound for the first eigenvalue of the sub-Laplacian on sub-Riemannian manifolds with transverse symmetries. When the manifold is of $H$-type, we obtain a corresponding rigidity result: If the optimal lower bound for the first eigenvalue is reached, then  the manifold is equivalent to a 1 or a 3-Sasakian sphere.
 \end{abstract}

\tableofcontents

\section{Introduction}

The study of optimal lower bounds for  sub-Laplacians on manifolds has attracted a lot of interest in the past  few years. In particular, the most studied example has been the example of the sub-Laplacian on CR manifolds. In that case, the story goes back at least to the work by Greenleaf \cite{Gr} which has seen, since then, several improvements and variations. We mention in particular the works by Aribi-Dragomir-El Soufi \cite{ADE}, Barletta \cite{barletta}, Baudoin-Wang \cite{BW2}, Ivanov-Petkov-Vassilev \cite{IPV1,IPV5}, Li \cite{Li} and Li-Luk \cite{Li2}. Some optimal lower bounds for the first eigenvalue of sub-Laplacians also have been obtained in the context of quaternionic contact manifolds by Ivanov-Petkov-Vassilev \cite{IPV,IPV3,IPV4}. More general situations were even considered by Hladky \cite{Hl}.

\

In the present work, we obtain optimal first eigenvalue lower bounds in a large class of sub-Riemannian manifolds that encompasses as a very special case Sasakian manifolds and 3-Sasakian manifolds. This class is the class of sub-Riemannian manifolds with transverse symmetries that was introduced in  \cite{BG}. Roughly speaking, a sub-Riemannian manifold with transverse symmetries is a sub-Riemannian manifold for which the horizontal distribution admits a canonical intrinsic complement which is generated by sub-Riemannian Killing fields. The lower bound we obtain in that case improves a previous lower bound that was obtained by Baudoin-Kim in \cite{BK}. The method of \cite{BK} was to apply to an eigenfunction of the sub-Laplacian the curvature-dimension inequality  proved in \cite{BG}, and then to integrate this curvature-dimension inequality over the manifold. When used on a Riemannian manifold, this technique provides the optimal Lichnerowicz estimate. However, interestingly, this technique does not give the optimal estimate in the sub-Riemannian case and more work is needed. Our approach here, is to take advantage of the Bochner-Weitzenb\"ock formula that was recently proved in  \cite{baudoin} and to integrate this equality over the manifold. This gives an equality which when applied to an eigenfunction gives a better estimate than \cite{BK} for the first eigenvalue. In the 1 or the 3-Sasakian case, the lower bound we obtain coincides with the known optimal lower bound.

\

In the second part of the paper, we check the optimality of our lower bound, by proving a rigidity result in the spirit of Obata \cite{obata}. More precisely we prove the following result:

\begin{theorem}
Let $\M$ be a compact sub-Riemannian manifold of $H$-type with dimension $d+\di$, $d$ being the dimension of the horizontal bundle and $\di$ the dimension of the vertical bundle. Assume  that for every smooth horizontal one-form $\eta$,
\[
\langle \mathfrak{Ric}_{\mathcal{H}}(\eta),\eta \rangle_\mathcal{H^*} \ge \rho \| \eta \|^2_\mathcal{H^*},
\]
with $\rho>0$, then  the first eigenvalue $\lambda_1$ of the sub-Laplacian $-L$  satisfies
\begin{align*}
 \lambda_1 \geq  \frac{\rho d}{d-1+3 \di }.
\end{align*}
Moreover, if $\lambda_1 =  \frac{\rho d}{d-1+3 \di }$, then $\M$ is equivalent to a 1-Sasakian sphere $ \mathbb{S}^{2m+1} ( r )$ or a 3-Sasakian sphere $ \mathbb{S}^{4m+3}( r )$ for some $r>0$ and $m \ge 1$.
\end{theorem}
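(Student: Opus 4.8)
The plan is to run a sub-Riemannian Lichnerowicz argument built on the integrated Bochner--Weitzenb\"ock identity of \cite{baudoin}, and then to read off the rigidity from its equality cases.

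First I would fix a first eigenfunction $u$, normalized so that $\int_\M u^2\, d\mu = 1$; then $Lu = -\lambda_1 u$ and $\int_\M |\nabla_H u|^2\, d\mu = \lambda_1$. Applying the Bochner--Weitzenb\"ock formula to the horizontal form $\nabla_H u$ and integrating over the compact manifold $\M$ annihilates the divergence terms and produces an exact identity of the schematic shape
\[
\lambda_1^2 \;=\; \int_\M (Lu)^2\, d\mu \;=\; \int_\M |\nabla_H^2 u|^2\, d\mu \;+\; \int_\M \langle \mathfrak{Ric}_\mathcal{H}(\nabla_H u),\nabla_H u\rangle\, d\mu \;+\; \mathcal{V}(u),
\]
where $\mathcal{V}(u)$ collects the genuinely sub-Riemannian terms coming from the vertical derivatives of $u$ and from the antisymmetric part of the horizontal Hessian. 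This is where the $H$-type structure enters: the identities $J_Z^2 = -|Z|^2\,\mathrm{Id}$ relating the horizontal endomorphisms to the $\di$ vertical directions let one re-express $\mathcal{V}(u)$ and control it.

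Next I would insert the sharp inequalities. From $Lu = \mathrm{tr}_\mathcal{H}\nabla_H^2 u$, Cauchy--Schwarz applied to the trace-free symmetric part gives $|\nabla_H^2 u|^2 \ge \tfrac{1}{d}(Lu)^2$, hence $\int_\M |\nabla_H^2 u|^2\, d\mu \ge \tfrac{1}{d}\lambda_1^2$. The hypothesis gives $\int_\M \langle \mathfrak{Ric}_\mathcal{H}(\nabla_H u),\nabla_H u\rangle\, d\mu \ge \rho\int_\M|\nabla_H u|^2\, d\mu = \rho\lambda_1$. The remaining point, and the one where the factor $3\di$ is generated, is a lower bound $\mathcal{V}(u) \ge -\tfrac{3\di}{d}\lambda_1^2$ obtained by completing squares in the vertical variables and using the $H$-type relations to reduce everything back to multiples of $\int_\M|\nabla_H u|^2 = \lambda_1$. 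Combining the three estimates yields
\[
\lambda_1^2 \;\ge\; \frac{1}{d}\lambda_1^2 \;+\; \rho\lambda_1 \;-\; \frac{3\di}{d}\lambda_1^2,
\]
which rearranges to $\lambda_1\,\dfrac{d-1+3\di}{d} \ge \rho$, that is $\lambda_1 \ge \dfrac{\rho d}{d-1+3\di}$.

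For the rigidity, equality $\lambda_1 = \rho d/(d-1+3\di)$ forces every inequality above to be saturated pointwise (the integrands being smooth and one-signed after the reductions). Equality in the trace inequality gives the sub-Riemannian Obata equation
\[
\big(\nabla_H^2 u\big)_{\mathrm{sym}} \;=\; -\frac{\lambda_1}{d}\, u\, g_{\mathcal{H}},
\]
equality in the curvature step gives $\mathfrak{Ric}_\mathcal{H}(\nabla_H u) = \rho\,\nabla_H u$ along the flow of $\nabla_H u$, and equality in the vertical step pins the vertical derivatives of $u$ to $u$ and to the $H$-type endomorphisms. I would then argue in the spirit of Obata \cite{obata}: the Hessian equation makes $u$ a radial-type potential whose horizontal gradient flow foliates $\M$ by the level sets of a fixed model, while the rigid vertical identities force the transverse complex or quaternionic structure to be of constant type, so that $\M$ is, after rescaling, a Sasakian or $3$-Sasakian space form. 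The main obstacle I anticipate is this reconstruction step---promoting the pointwise tensorial identities to a global isometry with a model---together with showing that saturation is compatible only with $\di \in \{1,3\}$ (the structures modeled on $\mathbb C$ and $\mathbb H$), which is what pins $\M$ to the Sasakian sphere $\bS^{2m+1}(r)$ or the $3$-Sasakian sphere $\bS^{4m+3}(r)$.
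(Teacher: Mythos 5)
Your frame for the lower bound (integrate the Bochner--Weitzenb\"ock identity applied to $df$, then Cauchy--Schwarz on the trace plus the Ricci hypothesis) is the right one, but the step you black-box, the vertical estimate $\mathcal{V}(u)\ge -\tfrac{3\di}{d}\lambda_1^2$, is exactly the hard point of the theorem, and \emph{completing squares in the vertical variables} does not produce it. If you integrate the Bochner formula and complete squares directly, the cross term between $\nabla_\mathcal{H}\,df$ and the torsion operator $\mathfrak{T}^\varepsilon_\mathcal{H}\,df$ yields the constant $4$, not $3$: this is the earlier argument of \cite{BK}, recalled in the paper, and it only gives $\lambda_1\ge \rho d/(d-1+4\di)$. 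The improvement to $3\di$ requires a further identity (the paper's Lemma 3.2): one must first integrate by parts the full square $\int_\M\|\nabla_\mathcal{H}\,df-\mathfrak{T}^\varepsilon_\mathcal{H}\,df\|^2_{2\varepsilon}$, using the Yang--Mills condition to convert the cross term into the nonnegative trace term $\tfrac{1}{4\varepsilon}\int_\M\mathbf{Tr}\bigl(J^*_{\nabla_\mathcal{V}f}J_{\nabla_\mathcal{V}f}\bigr)$, and then split that cross term as $\tfrac32-\tfrac12$ so that the completed square becomes $\bigl\|\nabla_\mathcal{H}\,df-\tfrac32\mathfrak{T}^\varepsilon_\mathcal{H}\,df\bigr\|^2_{\mathcal{V}^*}$ rather than $\|\nabla_\mathcal{H}\,df-\mathfrak{T}^\varepsilon_\mathcal{H}\,df\|^2_{\mathcal{V}^*}$. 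Without this specific maneuver, the factor $3\di$ in your displayed inequality is unsupported, and your chain of estimates only proves the weaker, non-optimal bound.

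The rigidity half has the larger hole, and you flag it yourself: promoting the pointwise identities to a global isometry, and showing that equality forces $\di\in\{1,3\}$, is precisely what must be proved, and your outline offers no mechanism for either. The paper's mechanism has three ingredients your proposal lacks. First, because the vertical fields are Killing fields, $L$ commutes with every $Z\in\mathcal{V}$, so $Zf$ is again an extremal eigenfunction; feeding $Zf$ back into the extremal equation for the mixed Hessian yields the purely vertical identity $\tfrac12(ZZ'+Z'Z)f=-\tfrac{4\lambda_1^2}{d^2}\langle Z,Z'\rangle_\mathcal{V}\,f$, which your equality analysis (which only controls horizontal and mixed components) cannot reach. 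Second, with this identity in hand one verifies that for the Riemannian metric $g_{2\varepsilon}=g_\mathcal{H}\oplus\tfrac{1}{2\varepsilon}g_\mathcal{V}$ with the specific choice $\varepsilon=2\lambda_1/d$ (and $g_\mathcal{V}$ bi-invariant), the Levi-Civita Hessian satisfies $\tilde{\nabla}^2 f=-\tfrac{\lambda_1}{d}f\,g_{2\varepsilon}$, so the classical Obata theorem \cite{obata} makes $(\M,g_{2\varepsilon})$ a round sphere; a second application of Obata, to the group generated by $\mathcal{V}$ via the vertical identity above, forces that group to be $\mathbf{U}(1)$ or $\mathbf{SU}(2)$ --- this, not any pointwise saturation argument, is what pins $\di\in\{1,3\}$. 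Third, since the group acts properly, one gets a Riemannian submersion with totally geodesic fibers over $\M/\mathbb{G}$, and Escobales' classification \cite{escobales} of such submersions of spheres identifies $\M$ with the 1- or 3-Sasakian sphere. None of these steps (the commutation trick, the double use of Obata, the appeal to Escobales) appears in your outline, so the rigidity conclusion is not reached.
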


This result for $H$-type manifolds generalizes the corresponding theorem for Sasakian manifolds by Chang-Chiu \cite{Chiu}  and for 3-Sasakian manifolds by Ivanov-Petkov-Vassilev \cite{IPV3}. Like in the cited references, the main idea is to prove that an extremal eigenfunction $f$ for the sub-Laplacian needs to satisfy $\tilde{\nabla}^2 f =-\alpha f$, for the  Levi-Civita connection of a well chosen Riemannian extension of the sub-Riemannian metric.  We can observe that in the works  \cite{IPV1,Li} or \cite{IPV4} the Sasakian condition is not needed, it is therefore an interesting question to try to generalize our result to more general sub-Riemannian structures where the transverse symmetries condition is not assumed.

\

The paper is organized as follows. Section 2 presents the basic materials on sub-Riemannian manifolds with transverse symmetries. In particular, we present the Bochner-Weitzenb\"ock formula that was proved in \cite{baudoin}. Section 3 is devoted to the proof of the lower bound for the first eigenvalue and Section 4 proves its optimality in the context of $H$-type manifolds.

\section{The Bochner-Weitzenb\"ock formula on sub-Riemannian manifolds with transverse symmetries}

The notion of sub-Riemannian manifold with transverse symmetries was introduced in \cite{BG}. We recall here the main geometric quantities and operators related to this structure and   we refer to \cite{baudoin} and  \cite{BG} for further details. We in particular focus on the Bochner-Weitzenb\"ock formula that was proved in \cite{baudoin}.

\

Let $\M$ be a smooth, connected  manifold with dimension $d+\di$. We assume that $\bM$ is equipped with a bracket generating distribution $\mathcal{H}$ of dimension $d$ and a fiberwise inner product $g_\mathcal{H}$ on that distribution.
The distribution $\mathcal{H}$ is referred to as the set of \emph{horizontal directions}, while a vector field which is tangent to $\Ho$ is said to be horizontal.

\begin{definition}
It is said that $\M$ is a sub-Riemannian manifold with transverse symmetries if there exists an $\di$- dimensional Lie algebra $\mathcal{V}$ of sub-Riemannian Killing vector fields such that for every $x \in \bM$,
 \[
 T_x \bM= \mathcal{H}(x) \oplus \mathcal{V}(x).
 \]
\end{definition}

We recall that a vector field $Z$ is said to be a sub-Riemannian Killing vector field if the flow it generates locally preserves the horizontal distribution and induces a $g_\mathcal{H}$-isometry. Also $\mathcal{V}$ denotes the distribution referred to as the set of \emph{vertical directions}.
 The choice of an inner product $g_{\mathcal{V}}$ on the Lie algebra $\mathcal{V}$ naturally endows $\bM$ with a one-parameter family of Riemannian metrics that makes the decomposition $\mathcal{H} \oplus \mathcal{V}$ orthogonal:
\[
g_{\varepsilon}=g_\mathcal{H} \oplus \frac{1}{\varepsilon}  g_{\mathcal{V}}, \quad \varepsilon >0.
\]
 For notational convenience, we will often use the notation $\langle \cdot, \cdot \rangle_\varepsilon$, resp. $\langle \cdot ,\cdot \rangle_\mathcal{H}$,  resp $\langle \cdot ,\cdot \rangle_\mathcal{V}$, instead of $g_\varepsilon$, resp. $g_\mathcal{H}$, resp. $g_\mathcal{V}$.  We can extend $g_\mathcal{H}$ on $T_x\M \times T_x \M$ by the requirement that $g_\mathcal{H}(u,v)=0$ whenever $u$ or $v$ is in $\mathcal{V}(x)$. We similarly extend $g_\mathcal{V}$. Hence for any $u \in T_x\M$,
 \[
 \| u \|_{\varepsilon}^2=\| u\|_\mathcal{H}^2 +\frac{1}{\varepsilon} \| u\|_\mathcal{V}^2.
 \]

 The volume measure obtained as a product of the horizontal volume measure determined by $g_\mathcal{H} $ and the volume measure determined by $g_\mathcal{V} $ will be denoted by $\mu$ and is our reference measure on $\M$.

\

The following  connection was introduced in \cite{BG}.

\begin{proposition}[See \cite{BG}]
There exists a unique connection $\nabla$ on $\mathbb{M}$ satisfying the following properties:
\begin{itemize}
\item[(i)] $\nabla g_\varepsilon =0$, $\varepsilon >0$;
\item[(ii)] If $X$ and $Y$ are horizontal vector fields, $\nabla_{X} Y$ is horizontal;
\item[(iii)] If $Z \in \mathcal{V}$, $\nabla Z=0$;
\item[(iv)] If $X,Y$ are horizontal vector fields and $Z \in \mathcal{V}$,
the torsion vector field $T(X,Y)$ is vertical and $T(X, Z)=0$.
\end{itemize}
\end{proposition}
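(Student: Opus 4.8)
The plan is to pin down $\nabla$ by prescribing the covariant derivative $\nabla_A B$ on each of the four combinations of horizontal and vertical arguments, showing in every case that the prescription is forced by (i)--(iv); this yields existence and uniqueness simultaneously. I will fix a basis $Z_1,\dots,Z_\di$ of the Lie algebra $\mathcal{V}$ and use that, since these are globally defined fields and $g_\mathcal{V}$ is a fixed inner product on $\mathcal{V}$, each $g_\mathcal{V}(Z_i,Z_j)$ is constant on $\M$. I also record that, because $g_\varepsilon=g_\mathcal{H}+\tfrac1\varepsilon g_\mathcal{V}$ for every $\varepsilon$, condition (i) is equivalent to $\nabla g_\mathcal{H}=0$ and $\nabla g_\mathcal{V}=0$ holding separately.

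First I would dispose of the vertical-valued and mixed pieces, which cost no real work. Condition (iii) forces $\nabla_U Z=0$ for every $Z\in\mathcal{V}$ and every $U$, in particular $\nabla_X Z=0$ and $\nabla_Z W=0$. For $X$ horizontal and $Z\in\mathcal{V}$, condition (iv) gives $T(X,Z)=\nabla_X Z-\nabla_Z X-[X,Z]=0$, whence $\nabla_Z X=[Z,X]$; here the transverse-symmetry hypothesis enters to guarantee that $[Z,X]$ is again horizontal, since the flow of the Killing field $Z$ preserves $\mathcal{H}$, so this prescription respects the splitting.

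The only substantive piece is $\nabla_X Y$ for $X,Y$ horizontal. The cleanest construction I would use is to set $\nabla_X Y:=\pi_\mathcal{H}\big(D^\varepsilon_X Y\big)$, the horizontal projection of the Levi-Civita connection $D^\varepsilon$ of $g_\varepsilon$. Pairing the Koszul formula for $D^\varepsilon_X Y$ against a horizontal field $W$ annihilates every $\tfrac1\varepsilon g_\mathcal{V}$ contribution, which shows at a stroke that (a) the definition is independent of $\varepsilon$, (b) it inherits $C^\infty$-linearity and the Leibniz rule from $D^\varepsilon$ and so is a genuine connection, (c) it is $g_\mathcal{H}$-metric, and (d) $\nabla_X Y-\nabla_Y X=\pi_\mathcal{H}[X,Y]=[X,Y]_\mathcal{H}$, so the horizontal part of $T(X,Y)$ vanishes and $T(X,Y)=-[X,Y]_\mathcal{V}$ is vertical, establishing (ii) and the horizontal case of (iv). Equivalently, this $\nabla_X Y$ is the unique solution of the Koszul identity
\[
2\,g_\mathcal{H}(\nabla_X Y,W)=Xg_\mathcal{H}(Y,W)+Yg_\mathcal{H}(X,W)-Wg_\mathcal{H}(X,Y)+g_\mathcal{H}([X,Y]_\mathcal{H},W)-g_\mathcal{H}([Y,W]_\mathcal{H},X)-g_\mathcal{H}([X,W]_\mathcal{H},Y),
\]
which already exhibits uniqueness of this component.

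It then remains to verify (i) in full. Most cases are automatic: compatibility with $g_\mathcal{H}$ in horizontal directions is built into the construction above, while compatibility with $g_\mathcal{V}$ reduces, on the parallel frame $Z_1,\dots,Z_\di$, to the constancy of $g_\mathcal{V}(Z_i,Z_j)$. The one computation that is not purely formal, and which I expect to be the only genuine content of the proof, is $(\nabla_Z g_\mathcal{H})(Y,W)$ for $Z\in\mathcal{V}$ and $Y,W$ horizontal: substituting $\nabla_Z Y=[Z,Y]$ and $\nabla_Z W=[Z,W]$ turns this expression into exactly $(\mathcal{L}_Z g_\mathcal{H})(Y,W)$, which vanishes precisely because $Z$ is a sub-Riemannian Killing field. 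This identification of the $\mathcal{V}$-directional derivative of $g_\mathcal{H}$ with a Lie derivative is where the Killing hypothesis is genuinely used. Since every component of $\nabla$ was shown to be forced by (i)--(iv), uniqueness is immediate, completing the argument.
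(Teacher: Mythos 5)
Your proof is correct. Note that the paper itself contains no proof of this proposition: it is imported from \cite{BG} with only a citation, so there is nothing internal to compare against; your argument is precisely the standard construction from that reference. Conditions (iii) and (iv) force $\nabla_U Z=0$ and $\nabla_Z X=[Z,X]$ (the bracket being horizontal because the flow of the Killing field $Z$ preserves $\mathcal{H}$), a Koszul-type identity forces the horizontal--horizontal part (equivalently, it is the horizontal projection of the Levi-Civita connection of $g_\varepsilon$, which your pairing argument correctly shows is independent of $\varepsilon$), and the Killing hypothesis, in the form $\mathcal{L}_Z g_\mathcal{H}=0$ on horizontal fields, delivers the one nontrivial case of metric compatibility. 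The only step worth spelling out more is that $Z_1,\dots,Z_\di$ is a pointwise basis of the vertical fibers --- the evaluation map $\mathcal{V}\to\mathcal{V}(x)$ is a linear isomorphism since it is surjective between spaces of the same dimension $\di$ --- which is what legitimizes both your $C^\infty(\M)$-linear extension of $Z\mapsto\nabla_Z X$ to general vertical lower arguments and the frame computation showing $\nabla g_\mathcal{V}=0$ from the constancy of $g_\mathcal{V}(Z_i,Z_j)$.
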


Intuitively $\nabla$ is the connection which coincides with  the Levi-Civita connection of the Riemannian metric $g_\varepsilon$ on the horizontal bundle $\mathcal{H}$ and that parallelizes the Lie algebra $\mathcal{V}$.

At every point $x\in \bM$, we can find a local  frame of vector fields $\{X_1,\cdots,X_d, Z_1, \cdots, Z_\di\}$ such that on a neighborhood of $x$:
 \begin{itemize}
\item[(a)] $\{X_1,\cdots,X_d \}$ is a $g_\mathcal{H}$-orthonormal basis of $\mathcal{H}$;
\item[(b)] $\{Z_1, \cdots, Z_\di \}$ is a $g_\mathcal{V}$-orthonormal basis of the Lie algebra $\mathcal{V}$;
\end{itemize}
Such a frame will be called a local adapted frame.

The sub-Laplacian on $\M$ is the second-order differential operator which is given in a local adapted frame by
\begin{align}\label{sublap}
L=\sum_{i=1}^d \nabla_{X_i}\nabla_{X_i} -\nabla_{\nabla_{X_i} X_i}.
\end{align}

By declaring a one-form horizontal (resp. vertical) if it vanishes on the vertical bundle $\mathcal{V}$ (resp. on the horizontal bundle $\mathcal{H}$), the splitting of the tangent space
 \[
 T_x \bM= \mathcal{H}(x) \oplus \mathcal{V}(x)
 \]
 gives a splitting of the cotangent space
  \[
 T^*_x \bM= \mathcal{H}^*(x) \oplus \mathcal{V}^*(x).
 \]

If $\{X_1,\cdots,X_d, Z_1, \cdots, Z_\di\}$ is a local  adapted frame, the dual frame will be denoted $\{\theta_1,\cdots,\theta_d, \nu_1, \cdots, \nu_\di\}$ and referred to as a local adapted coframe. With a slight abuse of notations, for $\varepsilon>0$, the metric on $ T^*_x \bM$ that makes $\{\theta_1,\cdots,\theta_d, \frac{1}{\sqrt{\varepsilon} } \nu_1, \cdots, \frac{1}{\sqrt{\varepsilon}} \nu_\di\}$ orthonormal will still be denoted $g_\varepsilon$ or $\langle \cdot, \cdot \rangle_\varepsilon$. This metric on the cotangent bundle can thus be written
 \begin{equation}\label{rdgh}
g_{\varepsilon}=g_\mathcal{H^*} \oplus \varepsilon  g_{\mathcal{V^*}}, \quad \varepsilon >0,
 \end{equation}
where $g_\mathcal{H^*}$ (resp. $g_\mathcal{V^*}$)  is the metric on $\mathcal{H}^*$ (resp. $\mathcal{V}^*$) that makes $\{\theta_1,\cdots,\theta_d\}$ (resp. $\{ \nu_1, \cdots, \nu_\di\}$ ) orthonormal. We use similar notations and conventions as before so that for every $\eta$ in $T^*_x \M$,
\[
\| \eta \|^2_{\varepsilon} =\| \eta \|_\mathcal{H^*}^2+\varepsilon \| \eta \|_\mathcal{V^*}^2.
\]

\

 We now introduce some tensors that will play an important role in the sequel.
 We define $\mathfrak{Ric}_{\mathcal{H}}: T^*_x \bM \to T^*_x \bM$ as the  symmetric linear map on one forms such that for every smooth functions $f,g$,
\[
\langle  \mathfrak{Ric}_{\mathcal{H}} (df), dg \rangle_\mathcal{H^*}=\mathbf{Ricci} (\nabla_\mathcal{H} f ,\nabla_\mathcal{H} g),
\]
where $\mathbf{Ricci}$ is the Ricci curvature of the connection $\nabla$ and $\nabla_\mathcal{H}$ the horizontal gradient (projection of the gradient on the horizontal distribution $\mathcal{H}$).  Similarly, we will denote by $\nabla_\mathcal{V}$ the vertical gradient, that is the projection of the gradient on the vertical bundle. In a local adapted frame $\{X_1,\cdots,X_d, Z_1, \cdots, Z_\di\}$, we have thus
\[
\nabla_\mathcal{H} f= \sum_{i=1}^d (X_i f) X_i,
\]
\[
\nabla_\mathcal{V} f= \sum_{m=1}^\di (Z_m f) Z_m.
\]
  and
  \[
  \mathbf{Ricci} (\nabla_\mathcal{H} f ,\nabla_\mathcal{H} g)=\sum_{n=1}^d g_\mathcal{H} ( \mathbf{R}(\nabla_\mathcal{H} f ,X_i)X_i, \nabla_\mathcal{H} g),
  \]
  where $\mathbf{R}$ is the Riemannian curvature tensor: $\mathbf{R}(X_i,X_j)X_k= \nabla_{X_i} \nabla_{X_j}X_k-\nabla_{X_j} \nabla_{X_i}  X_k -\nabla_{[X_i,X_j]} X_k$.

  \

  For $Z \in \mathcal{V}$, we consider  the unique skew-symmetric map $J_Z$ defined on the horizontal bundle $\mathcal{H}$ such that for every horizontal vector fields $X$ and $Y$,
\begin{align}\label{Jmap}
\langle J_Z (X),Y\rangle_\mathcal{H}= \langle Z,T(X,Y) \rangle_{\mathcal{V}}.
\end{align}
We can then extend $J_Z$ to the whole tangent space $T_x\M$ by imposing that $J_Z (V)=0$ whenever $V$ is a vertical vector field.
If $(Z_m)_{1 \le m \le \di}$ is a $g_\V$-orthonormal basis of the Lie algebra $\mathcal{V}$, the operator $\sum_{m=1}^\di J_{Z_m}^*J_{Z_m}=-\sum_{m=1}^\di J^2_{Z_m}:T_x\M \to T_x \M$ does not depend on the choice of the basis and will concisely be denoted by $-\mathbf{J}^2$. We can note that in the case where $\M$ is a Sasakian manifold, $\mathbf{J}^2=-\di \mathbf{Id}_\mathcal{H}$. Though originally defined on vector fields we will also consider $-\mathbf{J}^2$ as the linear map $T^*_x\M\to T_x^*\M $ defined by
\[
    \langle -\mathbf{J}^2 (\theta_i),\theta_j \rangle_{\Ho^*} = \langle -\mathbf{J}^2(X_i) , X_j  \rangle_\Ho,\quad\quad 1\le i,j\le d
\]
Then $-\mathbf{J}^2$ is defined to be $0$ on vertical one-forms.

\

If $V$ is a horizontal vector field, then we consider an operator $\mathfrak{T}^\varepsilon_V$ on smooth sections of the cotangent bundle given by
\[
\mathfrak{T}^\varepsilon_V \eta =-\sum_{j=1}^d \eta (T(V,X_j)) \theta_j +\frac{1}{2 \varepsilon} \sum_{m=1}^\di \eta( J_{Z_m} V) \nu_m
\]
in a local frame.
It is easily seen that $\mathfrak{T}^\varepsilon_V$ is a skew-symmetric operator for the  metric $g_{2\varepsilon}$ that was previously defined on one-forms by \eqref{rdgh}.

\

If $\eta$ is a one-form, we define the horizontal gradient in a local adapted frame of $\eta$ as the $(0,2)$ tensor
\[
\nabla_\mathcal{H} \eta =\sum_{i=1}^d \nabla_{X_i} \eta \otimes \theta_i.
\]
Similarly, we will use the notation
\[
\mathfrak{T}^\varepsilon_\mathcal{H} \eta =\sum_{i=1}^d \mathfrak{T}^\varepsilon_V \eta  \otimes \theta_i.
\]

We finally recall the following definition that was introduced in \cite{BG}:

\begin{definition}
The sub-Riemannian manifold $\M$ is said to be of Yang-Mills type, if the horizontal divergence of the torsion vanishes that is for every horizontal vector field $X$, and every adapted local frame
\[
 \sum_{\ee=1}^d(\nabla_{X_\ee} T) (X_\ee,X)=0.
\]
\end{definition}

There are many interesting examples of Yang-Mills sub-Riemannian manifolds with transverse symmetries (see \cite{BG}). Sasakian and 3-Sasakian manifolds are examples of Yang-Mills sub-Riemannian manifolds. Though not identical, the Yang-Mills condition can be compared to the divergence free  torsion condition that was considered in \cite{IPV1}.

\

The following Bochner Weitzenb\"ock formula was proved in \cite{baudoin} to which we refer for further details.

\begin{theorem}[Bochner-Weitzenb\"ock formula \cite{baudoin}]\label{bochner}
Assume that $\M$ is a sub-Riemannian manifold with transverse symmetries of Yang-Mills type. For $\varepsilon >0$, we consider the  $g_{2 \varepsilon}$-self-adjoint operator which is defined on one-forms by
\[
\square_\varepsilon=-(\nabla_\mathcal{H} -\mathfrak{T}_\mathcal{H}^\varepsilon)^* (\nabla_\mathcal{H} -\mathfrak{T}_\mathcal{H}^\varepsilon)-\frac{1}{2 \varepsilon} \mathbf{J}^2 - \mathfrak{Ric}_{\mathcal{H}}.
\]
Then, for every smooth function $f$ on $\M$,
\[
d(Lf)=\square_\varepsilon (df),
\]
and for any smooth one-form $\eta$,
\[
\frac{1}{2} L \| \eta \|_{2\varepsilon}^2 -\langle \square_\varepsilon \eta , \eta \rangle_{2\varepsilon}=  \| \nabla_{\mathcal{H}} \eta  -\mathfrak{T}_\mathcal{H}^\varepsilon  \eta \|_{2\varepsilon}^2 +\left\langle \mathfrak{Ric}_{\mathcal{H}}(\eta)+\frac{1}{2 \varepsilon} \mathbf{J}^2 (\eta), \eta \right\rangle_\mathcal{H^*}.
\]
\end{theorem}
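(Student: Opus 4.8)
The plan is to treat the two displayed formulas separately. Work in a local adapted frame and abbreviate $D=\nabla_\mathcal{H}-\mathfrak{T}^\varepsilon_\mathcal{H}$, so that $\square_\varepsilon=-D^*D-\frac{1}{2\varepsilon}\mathbf{J}^2-\mathfrak{Ric}_{\mathcal{H}}$ by definition. The second formula is essentially algebraic once one has the generic pointwise identity
\[
\frac{1}{2}L\|\eta\|_{2\varepsilon}^2=\|D\eta\|_{2\varepsilon}^2-\langle D^*D\eta,\eta\rangle_{2\varepsilon},
\]
valid for any one-form $\eta$. To get it, I would note that $D$ is $g_{2\varepsilon}$-metric --- indeed $\nabla g_{2\varepsilon}=0$ by property (i), and each $\mathfrak{T}^\varepsilon_{X_i}$ is $g_{2\varepsilon}$-skew --- and then apply the Leibniz rule to $\|\eta\|_{2\varepsilon}^2$, expanding $\sum_i(X_iX_i-\nabla_{X_i}X_i)\|\eta\|_{2\varepsilon}^2$; the quadratic terms give $\|D\eta\|_{2\varepsilon}^2$ and the rest give $\sum_i\langle(D_{X_i}D_{X_i}-D_{\nabla_{X_i}X_i})\eta,\eta\rangle_{2\varepsilon}=-\langle D^*D\eta,\eta\rangle_{2\varepsilon}$, the final identification holding because $\mu$ is the measure for which $L$ is symmetric. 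Substituting $-D^*D=\square_\varepsilon+\frac{1}{2\varepsilon}\mathbf{J}^2+\mathfrak{Ric}_{\mathcal{H}}$ and using that $\mathbf{J}^2$ and $\mathfrak{Ric}_{\mathcal{H}}$ annihilate vertical forms and preserve horizontal ones (so their $g_{2\varepsilon}$-pairing coincides with the $g_{\mathcal{H}^*}$-pairing) then yields the stated Bochner-Weitzenb\"ock equality.

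The substance is the first formula, which I would rewrite as $-D^*D(df)=d(Lf)+\mathfrak{Ric}_{\mathcal{H}}(df)+\frac{1}{2\varepsilon}\mathbf{J}^2(df)$ and prove by expanding $D^*D(df)$ in the adapted frame, splitting into horizontal and vertical parts. The expansion produces the rough sub-Laplacian of the Hessian $\nabla(df)$, first-order cross terms between $\nabla_\mathcal{H}$ and $\mathfrak{T}^\varepsilon_\mathcal{H}$, and a zeroth-order term quadratic in $\mathfrak{T}^\varepsilon$. On the horizontal part, commuting the third derivatives through the Ricci identity for $\nabla$ rewrites $\sum_i\nabla^2_{X_i,X_i}(df)(X_j)$ as $X_j(Lf)$ plus a curvature contraction which, by the defining property of $\mathfrak{Ric}_{\mathcal{H}}$, equals $\mathfrak{Ric}_{\mathcal{H}}(df)$; the quadratic $\mathfrak{T}^\varepsilon$-term produces, via the relation \eqref{Jmap} and the $\frac{1}{2\varepsilon}$ weight built into $\mathfrak{T}^\varepsilon$, exactly $\frac{1}{2\varepsilon}\mathbf{J}^2(df)$. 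On the vertical part, properties (iii) and (iv), $\nabla Z=0$ and $T(X,Z)=0$, together with the fact that the $Z_m$ are Killing, identify the vertical component of $-D^*D(df)$ with $\sum_{m=1}^\di Z_m(Lf)\,\nu_m$.

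The hard part is verifying that the leftover first-order terms cancel, leaving no spurious first-order operator on either component. After the curvature and $\mathbf{J}^2$ contributions are accounted for, one is left with terms schematically of the form $\sum_i(\nabla_{X_i}T)(X_i,\cdot)$ and $T\cdot\nabla$; the divergence-type terms $\sum_i(\nabla_{X_i}T)(X_i,\cdot)$ vanish precisely by the Yang-Mills hypothesis, while the remaining $T\cdot\nabla$ contributions must cancel against the first-order cross terms coming from $\mathfrak{T}^\varepsilon_\mathcal{H}$, the cancellation relying on the $\frac{1}{2\varepsilon}$ weights in $\mathfrak{T}^\varepsilon$ and in $\frac{1}{2\varepsilon}\mathbf{J}^2$ being matched. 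This is the step where the whole transverse-symmetry structure is used at once --- the compatibility $\nabla g_\varepsilon=0$ for every $\varepsilon$, the $g_{2\varepsilon}$-skewness of $\mathfrak{T}^\varepsilon$, the algebra of $J$-maps attached to $\V$, and the Yang-Mills condition. Once this componentwise cancellation is checked, reassembling the horizontal and vertical parts gives $d(Lf)=\square_\varepsilon(df)$, which together with the first step completes the proof.
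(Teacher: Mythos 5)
A preliminary remark: the paper itself contains no proof of this theorem. It is recalled verbatim from \cite{baudoin} (``The following Bochner Weitzenb\"ock formula was proved in \cite{baudoin} to which we refer for further details''), so there is no in-paper argument to compare you against; what follows is an assessment of your proposal on its own terms.

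Your treatment of the second identity is essentially correct and is the standard argument: $\nabla g_{2\varepsilon}=0$ together with the $g_{2\varepsilon}$-skewness of each $\mathfrak{T}^\varepsilon_{X_i}$ makes $D=\nabla_\mathcal{H}-\mathfrak{T}^\varepsilon_\mathcal{H}$ metric, the Leibniz expansion of $L\|\eta\|^2_{2\varepsilon}$ then gives the pointwise Bochner identity once one accepts the local-frame expression of $-(\nabla_\mathcal{H}-\mathfrak{T}^\varepsilon_\mathcal{H})^*(\nabla_\mathcal{H}-\mathfrak{T}^\varepsilon_\mathcal{H})$ (which the paper also imports from \cite{baudoin}), and the substitution $-D^*D=\square_\varepsilon+\frac{1}{2\varepsilon}\mathbf{J}^2+\mathfrak{Ric}_\mathcal{H}$ finishes, since both $\mathbf{J}^2$ and $\mathfrak{Ric}_\mathcal{H}$ take values in horizontal forms so their $g_{2\varepsilon}$-pairing reduces to the $g_{\mathcal{H}^*}$-pairing.

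The genuine gap is in the first identity, $d(Lf)=\square_\varepsilon(df)$, which is the entire substance of the theorem. Your argument for it consists of listing the terms that appear when $D^*D(df)$ is expanded and asserting that the unwanted ones ``must cancel.'' That cancellation is precisely what has to be proven: the theorem holds only because several structures conspire with exactly matching constants, and none of this is checked. Concretely, the quadratic term $\sum_i\mathfrak{T}^\varepsilon_{X_i}\mathfrak{T}^\varepsilon_{X_i}(df)$ does produce the horizontal counterterm $\frac{1}{2\varepsilon}\mathbf{J}^2$, but it also produces a vertical term of order $\frac{1}{2\varepsilon}$, namely
\[
-\frac{1}{2\varepsilon}\sum_{m,l}\mathbf{Tr}\left(J^*_{Z_m}J_{Z_l}\right)(Z_l f)\,\nu_m,
\]
which is nowhere in $\square_\varepsilon(df)$ and must be cancelled by the first-order cross terms $\nabla_\mathcal{H}\mathfrak{T}^\varepsilon+\mathfrak{T}^\varepsilon\nabla_\mathcal{H}$; that cancellation in turn uses the antisymmetry identity $\nabla^2f(X,Y)-\nabla^2f(Y,X)=-T(X,Y)f$ for the torsion connection, the skewness of $J_{Z_m}$, and the Yang-Mills condition to dispose of the $\sum_i(\nabla_{X_i}T)(X_i,\cdot)$ residue. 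Similarly, your identification of the vertical component of $-D^*D(df)$ with $\sum_m Z_m(Lf)\,\nu_m$ ``by properties (iii), (iv) and the Killing property'' is not a consequence of those properties as stated; it is again a computation equivalent to $[Z_m,L]=0$ plus the matching of the cross terms above. So the plan is the right one and is executable, but as written it is an outline in which the decisive computations are named rather than performed; nothing in the proposal rules out a spurious surviving first-order term, which is exactly what the proof must exclude.
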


In the previous statement $(\nabla_\mathcal{H} -\mathfrak{T}_\mathcal{H}^\varepsilon)^*$ is understood as an adjoint for the $g_{2 \varepsilon}$-metric and  it is easily seen (see \cite{baudoin})  that in a local adapted frame, we have
\[
-(\nabla_\mathcal{H} -\mathfrak{T}_\mathcal{H}^\varepsilon)^* (\nabla_\mathcal{H} -\mathfrak{T}_\mathcal{H}^\varepsilon)=\sum_{i=1}^d (\nabla_{X_i} -\mathfrak{T}^\varepsilon_{X_i})^2 - ( \nabla_{\nabla_{X_i} X_i}-  \mathfrak{T}^\varepsilon_{\nabla_{X_i} X_i}) ,
\]
and for any smooth one-form $\eta$,
\[
\| \nabla_{\mathcal{H}} \eta  -\mathfrak{T}_\mathcal{H}^\varepsilon  \eta \|^2_{2\varepsilon}=\sum_{i=1}^d  \| \nabla_{X_i} \eta  -\mathfrak{T}^\varepsilon_{X_i} \eta \|_{2\varepsilon}^2 .
\]

\section{Lichnerowicz estimate}

From now on, we consider a compact Yang-Mills sub-Riemannian manifold $\M$ with transverse symmetries and adopt the conventions and notations of the previous section. In particular $L$ denotes the sub-Laplacian on $\M$.  In this section, we prove the following result.

\begin{theorem}\label{lichne}
Assume that for every smooth horizontal one-form $\eta$,
\[
\langle \mathfrak{Ric}_{\mathcal{H}}(\eta),\eta \rangle_\mathcal{H^*} \ge \rho_1 \| \eta \|^2_\mathcal{H^*},\quad  \left\langle -\mathbf{J}^2(\eta), \eta \right\rangle_\mathcal{H^*} \le \kappa \| \eta \|^2_\mathcal{H^*},
\]
and that for every $Z \in \mathcal{V}$,
\[
\mathbf{Tr} ( J_Z^* J_Z) \ge \rho_2 \| Z \|_\mathcal{V}^2,
\]
with $\rho_1,\rho_2 >0$ and $\kappa \ge 0$. Then the first eigenvalue $\lambda_1$ of the sub-Laplacian $-L$  satisfies
\begin{align*}
 \lambda_1 \geq  \frac{\rho_1}{1-\frac{1}{d}+\frac{3\kappa}{\rho_2}}.
\end{align*}
\end{theorem}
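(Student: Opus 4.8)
The plan is to apply the Bochner--Weitzenb\"ock identity of Theorem \ref{bochner} to the exact one-form $\eta=df$, where $f$ is an eigenfunction of $-L$ with eigenvalue $\lambda_1$, and then integrate the resulting pointwise equality against $\mu$. Since $Lf=-\lambda_1 f$, the first identity in Theorem \ref{bochner} gives $\square_\varepsilon(df)=d(Lf)=-\lambda_1\,df$, hence $\langle\square_\varepsilon(df),df\rangle_{2\varepsilon}=-\lambda_1\|df\|_{2\varepsilon}^2$. On the compact manifold $\M$ the integral of $L$ of any smooth function vanishes, so integrating kills the term $\tfrac12 L\|df\|_{2\varepsilon}^2$ and leaves
\[
\lambda_1\int_\M\|df\|_{2\varepsilon}^2\,d\mu=\int_\M\|\nabla_\Ho(df)-\mathfrak T^\varepsilon_\Ho(df)\|_{2\varepsilon}^2\,d\mu+\int_\M\left\langle\mathfrak{Ric}_\Ho(df)+\tfrac{1}{2\varepsilon}\mathbf J^2(df),df\right\rangle_{\Ho^*}\,d\mu.
\]
I would also record the integration-by-parts relation $\int_\M\|\nabla_\Ho f\|^2\,d\mu=\lambda_1\int_\M f^2\,d\mu$, and, using that each $Z_m$ commutes with $L$ (the defining Killing property of the transverse symmetries), the relation $\int_\M\sum_{i,m}(X_iZ_mf)^2\,d\mu=\lambda_1\int_\M\|\nabla_\V f\|^2\,d\mu$.

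The heart of the argument is a sharp lower bound for the integrated square term. In an adapted frame the tensor $\nabla_\Ho(df)-\mathfrak T^\varepsilon_\Ho(df)$ splits into a horizontal--horizontal block $A_{ij}$ and a vertical--horizontal block $B_{im}$, with $\|\cdot\|_{2\varepsilon}^2=\sum_{ij}A_{ij}^2+2\varepsilon\sum_{im}B_{im}^2$. For the block $A$ the trace is exactly $\sum_iA_{ii}=Lf=-\lambda_1 f$, so Cauchy--Schwarz gives $\int_\M\sum_{ij}A_{ij}^2\,d\mu\ge\tfrac1d\int_\M(Lf)^2\,d\mu=\tfrac{\lambda_1^2}{d}\int_\M f^2\,d\mu$; crucially, and in contrast with the Riemannian case, the antisymmetric part of $A$ does not vanish but equals the torsion paired with $\nabla_\V f$, contributing the exact extra term $\tfrac14\int_\M\mathbf{Tr}(J_{\nabla_\V f}^*J_{\nabla_\V f})\,d\mu$, which is where the hypothesis $\mathbf{Tr}(J_Z^*J_Z)\ge\rho_2\|Z\|_\V^2$ will enter. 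For the block $B$, expanding the square yields a term integrating to $\lambda_1\int_\M\|\nabla_\V f\|^2\,d\mu$, a term built from $\mathbf J^2$ that will cancel the $\tfrac1{2\varepsilon}\mathbf J^2$ contribution of the curvature term, and a cross term. Evaluating the cross term is the main computation: after integration by parts it reduces to the horizontal divergence of $J_{Z_m}\nabla_\Ho f$, and the Yang--Mills hypothesis annihilates the piece involving $\nabla T$, leaving precisely $\tfrac12\int_\M\mathbf{Tr}(J_{\nabla_\V f}^*J_{\nabla_\V f})\,d\mu$.

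Assembling these pieces, the $\varepsilon$-dependent terms and the $\mathbf J^2$ terms cancel, and using the Ricci lower bound I obtain the $\varepsilon$-free inequality
\[
\lambda_1^2\int_\M f^2\,d\mu\ge\frac{\lambda_1^2}{d}\int_\M f^2\,d\mu-\frac34\int_\M\mathbf{Tr}(J_{\nabla_\V f}^*J_{\nabla_\V f})\,d\mu+\rho_1\lambda_1\int_\M f^2\,d\mu.
\]
To close the estimate I need an upper bound on $\int_\M\mathbf{Tr}(J_{\nabla_\V f}^*J_{\nabla_\V f})\,d\mu$, and this is supplied by the mere positivity of the block $B$: for every $\varepsilon>0$ one has $\int_\M\mathbf{Tr}(J_{\nabla_\V f}^*J_{\nabla_\V f})\,d\mu\le 2\varepsilon\lambda_1\int_\M\|\nabla_\V f\|^2\,d\mu+\tfrac1{2\varepsilon}\int_\M\langle-\mathbf J^2(df),df\rangle_{\Ho^*}\,d\mu$. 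Feeding in $\langle-\mathbf J^2(df),df\rangle_{\Ho^*}\le\kappa\|\nabla_\Ho f\|^2$ and $\rho_2\|\nabla_\V f\|^2\le\mathbf{Tr}(J_{\nabla_\V f}^*J_{\nabla_\V f})$, and optimizing over $\varepsilon$, gives $\int_\M\mathbf{Tr}(J_{\nabla_\V f}^*J_{\nabla_\V f})\,d\mu\le\tfrac{4\kappa\lambda_1^2}{\rho_2}\int_\M f^2\,d\mu$. Substituting into the displayed inequality and dividing by $\lambda_1\int_\M f^2\,d\mu$ yields $\lambda_1\bigl(1-\tfrac1d+\tfrac{3\kappa}{\rho_2}\bigr)\ge\rho_1$, which is the assertion.

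I expect the main obstacle to be the exact evaluation of the cross term in block $B$: it is the only place where the Yang--Mills condition and the commutation $LZ_m=Z_mL$ must be used together, and a sign or factor slip there directly corrupts the coefficient $\tfrac34$, hence the factor $3$ in the final constant. A secondary delicate point is arranging the two-sided control of $\int_\M\mathbf{Tr}(J_{\nabla_\V f}^*J_{\nabla_\V f})\,d\mu$ together with the $\varepsilon$-optimization so that the vertical energy $\int_\M\|\nabla_\V f\|^2\,d\mu$ is genuinely eliminated rather than left as an uncontrolled term.
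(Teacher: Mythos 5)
Your proof is correct, and although it rests on the same two pillars as the paper's proof --- integrating the Bochner--Weitzenb\"ock formula of Theorem \ref{bochner}, and the Yang--Mills integration by parts that converts the cross term into $\frac{1}{4\varepsilon}\int_\M \mathbf{Tr}(J^*_{\nabla_{\mathcal{V}}f}J_{\nabla_{\mathcal{V}}f})$ (this is exactly \eqref{IPP}, and your sketch of it, via the horizontal divergence of $J_{Z_m}\nabla_{\mathcal{H}}f$ with the $\nabla T$ piece killed by the Yang--Mills condition, is the correct derivation) --- your handling of the vertical block is genuinely different. The paper's Lemma \ref{LMM} never evaluates that block: it writes the cross term as $\frac{3}{2}-\frac{1}{2}$ of itself, completes the square to produce the nonnegative term $2\varepsilon\int_\M \|\nabla_{\mathcal{H}} df - \frac{3}{2}\mathfrak{T}^{\varepsilon}_{\mathcal{H}} df\|^2_{\mathcal{V}^*}$, discards it, and cancels the surviving vertical-gradient terms through the single choice $2\varepsilon\lambda_1 = \frac{3}{4}\rho_2$. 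You instead use the commutation of $L$ with the vertical Killing fields --- a fact the paper invokes only in the rigidity section, not here --- to evaluate $\int_\M \sum_{i,m}(X_i Z_m f)^2 = \lambda_1 \int_\M \|df\|^2_{\mathcal{V}^*}$ exactly, so that every $\varepsilon$-dependent term cancels identically; this yields your $\varepsilon$-free inequality with coefficient $-\frac{3}{4}$ (arising as $\frac{1}{4}-1$), and the entire loss is then concentrated in the closing bound $\int_\M \mathbf{Tr}(J^*_{\nabla_{\mathcal{V}}f}J_{\nabla_{\mathcal{V}}f}) \le \frac{4\kappa\lambda_1^2}{\rho_2}\int_\M f^2$, which does follow from positivity of the vertical block together with $\rho_2\int_\M\|df\|^2_{\mathcal{V}^*}\le\int_\M \mathbf{Tr}(J^*_{\nabla_{\mathcal{V}}f}J_{\nabla_{\mathcal{V}}f})$ and the choice $\varepsilon=\frac{\rho_2}{4\lambda_1}$. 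I verified the assembly (the $\mathbf{J}^2$ cancellation, the cancellation of $2\varepsilon\lambda_1\int_\M\|df\|^2_{\mathcal{V}^*}$ on both sides, the final division by $\lambda_1\int_\M f^2$): the constants come out to $\lambda_1(1-\frac{1}{d}+\frac{3\kappa}{\rho_2})\ge\rho_1$ as claimed. What each route buys: yours is arguably more transparent, since the sharp constant $3$ is traced to one explicit term and no completion-of-squares trick is needed, at the price of the extra (legitimate) commutation input; the paper's route keeps the completed square $\|\nabla_{\mathcal{H}} df - \frac{3}{2}\mathfrak{T}^{\varepsilon}_{\mathcal{H}} df\|^2_{\mathcal{V}^*}$ intact, and it is precisely the vanishing of this square for an extremal eigenfunction that produces equation \eqref{equ:extremal equations} in the Obata rigidity proof, so Lemma \ref{LMM} is reused verbatim in Section 4, whereas your expansion would have to be reorganized to serve that purpose.
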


Before we prove the result, we briefly discuss the argument that was used in \cite{BK} to quickly  get, under the same assumptions, a lower bound on $\lambda_1$ which is less sharp.

 If $f$ is a smooth function on $\M$, then we have from Theorem \ref{bochner}
\[
\frac{1}{2} L \| df \|_{2\varepsilon}^2 -\langle  d(Lf) , df \rangle_{2\varepsilon}=  \| \nabla_{\mathcal{H}} df  -\mathfrak{T}_\mathcal{H}^\varepsilon  df \|_{2\varepsilon}^2 +\left\langle \mathfrak{Ric}_{\mathcal{H}}(df)+\frac{1}{2 \varepsilon} \mathbf{J}^2(df), df \right\rangle_\mathcal{H^*}.
\]
Integrating this equality over $\M$ and using the assumptions
\[
\langle \mathfrak{Ric}_{\mathcal{H}}(\eta),\eta \rangle_\mathcal{H^*} \ge \rho_1 \| \eta \|^2_\mathcal{H^*},\quad  \left\langle -\mathbf{J}^2(\eta), \eta \right\rangle_\mathcal{H^*} \le \kappa \| \eta \|^2_\mathcal{H^*},
\]
we deduce
\begin{align*}
-\int_\M \langle  d(Lf) , df \rangle_{2\varepsilon} \ge \int_\M \| \nabla_{\mathcal{H}} df  -\mathfrak{T}_\mathcal{H}^\varepsilon  df \|_{2\varepsilon}^2 +\left( \rho_1-\frac{\kappa}{2\varepsilon}\right)\int_\M \| df \|_\mathcal{H^*}^2 .
\end{align*}
An integration by parts of left hand side of the inequality gives then

\begin{align}\label{integratedCD}
\int_\M (Lf)^2 -2\varepsilon \int_\M \langle  d(Lf) , df \rangle_\mathcal{V^*}   \ge \int_\M \| \nabla_{\mathcal{H}} df  -\mathfrak{T}_\mathcal{H}^\varepsilon  df \|_{2\varepsilon}^2 +\left( \rho_1-\frac{\kappa}{2\varepsilon}\right)\int_\M \| df \|_\mathcal{H^*}^2 .
\end{align}

Now, a straightforward application of the Cauchy-Schwarz inequality yields  the pointwise lower bound
\begin{align}\label{CS}
\| \nabla_{\mathcal{H}} df  -\mathfrak{T}_\mathcal{H}^\varepsilon  df \|_{\mathcal{H^*}}^2 \ge \frac{1}{d} (Lf)^2 +\frac{1}{4} \rho_2 \| df \|_\mathcal{V^*}^2.
\end{align}
Coming back to \eqref{integratedCD}, we infer then
\[
\frac{d-1}{d}  \int_\M (Lf)^2 -2\varepsilon \int_\M \langle  d(Lf) , df \rangle_\mathcal{V^*}  \ge \left( \rho_1-\frac{\kappa}{2\varepsilon}\right)\int_\M \| df \|_\mathcal{H^*}^2+\frac{1}{4} \rho_2 \int_\M \| df \|_\mathcal{V^*}^2.
\]
In particular, if $Lf=-\lambda_1 f$, then we obtain
\[
\frac{d-1}{d} \lambda_1^2  \int_\M f^2 +2\varepsilon \lambda_1 \int_\M \| df \|_\mathcal{V^*}^2  \ge \left( \rho_1-\frac{\kappa}{2\varepsilon}\right)\lambda_1 \int_\M f^2+\frac{1}{4} \rho_2 \int_\M \| df \|_\mathcal{V^*}^2.
\]
Choosing $\varepsilon$ such that $ 2\varepsilon \lambda_1=\frac{1}{4} \rho_2$ yields
\begin{align*}
 \lambda_1 \geq  \frac{\rho_1}{1-\frac{1}{d}+\frac{4\kappa}{\rho_2}}.
\end{align*}
This is not the optimal lower bound we are looking for.  It is possible to improve this lower bound from \eqref{integratedCD} by first integrating by parts the term $ \int_\M \| \nabla_{\mathcal{H}} df  -\mathfrak{T}_\mathcal{H}^\varepsilon  df \|_{2\varepsilon}^2 $ and, then using Cauchy-Schwarz inequality. The key lemma is the following:

\begin{lemma}\label{LMM}
For $f \in C^\infty(\M)$,
\begin{align*}
\int_\M \| \nabla_{\mathcal{H}} df  -\mathfrak{T}_\mathcal{H}^\varepsilon  df \|_{2\varepsilon}^2 =& \int_\M \| \nabla_{\mathcal{H}} df  -\mathfrak{T}_\mathcal{H}^\varepsilon  df \|_{\mathcal{H^*}}^2 +2\varepsilon \int_\M \left\| \nabla_{\mathcal{H}} df  -\frac{3}{2}\mathfrak{T}_\mathcal{H}^\varepsilon  df \right\|_{\mathcal{V^*}}^2 \\
 & +\frac{1}{2} \int_\M \mathbf{Tr} ( J^*_{\nabla_\mathcal{V}f} J_{\nabla_\mathcal{V}f})-\frac{5}{2} \varepsilon \int_\M \|\mathfrak{T}_\mathcal{H}^\varepsilon  df \|_\mathcal{V^*}^2.
\end{align*}
\end{lemma}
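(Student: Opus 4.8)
The plan is to reduce the whole identity to a single global integration-by-parts formula, after first splitting everything into horizontal and vertical pieces. Since the cotangent metric $g_{2\varepsilon}$ splits orthogonally as $\|\cdot\|_{2\varepsilon}^2 = \|\cdot\|_{\mathcal{H^*}}^2 + 2\varepsilon\|\cdot\|_{\mathcal{V^*}}^2$, and the same holds slotwise for the $(0,2)$-tensor $\nabla_\mathcal{H}df - \mathfrak{T}_\mathcal{H}^\varepsilon df$ (whose second slot is purely horizontal), I would first record the pointwise identity
\[
\|\nabla_\mathcal{H}df - \mathfrak{T}_\mathcal{H}^\varepsilon df\|_{2\varepsilon}^2 = \|\nabla_\mathcal{H}df - \mathfrak{T}_\mathcal{H}^\varepsilon df\|_{\mathcal{H^*}}^2 + 2\varepsilon\|\nabla_\mathcal{H}df - \mathfrak{T}_\mathcal{H}^\varepsilon df\|_{\mathcal{V^*}}^2.
\]
The first term already matches the statement, so only the vertical term must be reshaped. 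Expanding both $\|\nabla_\mathcal{H}df - \mathfrak{T}_\mathcal{H}^\varepsilon df\|_{\mathcal{V^*}}^2$ and the target $\|\nabla_\mathcal{H}df - \frac{3}{2}\mathfrak{T}_\mathcal{H}^\varepsilon df\|_{\mathcal{V^*}}^2$ as quadratics in $\nabla_\mathcal{H}df$ and $\mathfrak{T}_\mathcal{H}^\varepsilon df$, a short bookkeeping shows the Lemma is equivalent to the single integral identity
\[
2\varepsilon\int_\M\langle\nabla_\mathcal{H}df, \mathfrak{T}_\mathcal{H}^\varepsilon df\rangle_{\mathcal{V^*}}\,d\mu = \frac{1}{2}\int_\M\mathbf{Tr}(J^*_{\nabla_\mathcal{V}f}J_{\nabla_\mathcal{V}f})\,d\mu,
\]
the $-\frac{5}{2}\varepsilon\int_\M\|\mathfrak{T}_\mathcal{H}^\varepsilon df\|_{\mathcal{V^*}}^2$ correction being produced automatically by the mismatch between the coefficients $1$ and $\frac{3}{2}$.

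Next I would compute the two vertical objects in a local adapted frame. Using $\nabla Z_m = 0$ one gets $(\nabla_{X_i}df)(Z_m) = X_iZ_mf$, so the vertical part of $\nabla_{X_i}df$ is $\sum_m (X_iZ_mf)\nu_m$, while by definition the vertical part of $\mathfrak{T}_{X_i}^\varepsilon df$ is $\frac{1}{2\varepsilon}\sum_m df(J_{Z_m}X_i)\nu_m$. Contracting (the $\nu_m$ being $g_{\mathcal{V^*}}$-orthonormal) yields the pointwise identity
\[
2\varepsilon\langle\nabla_\mathcal{H}df, \mathfrak{T}_\mathcal{H}^\varepsilon df\rangle_{\mathcal{V^*}} = \sum_{i,m}(X_iZ_mf)\,df(J_{Z_m}X_i) = -\sum_m\langle\nabla_\mathcal{H}(Z_mf), J_{Z_m}\nabla_\mathcal{H}f\rangle,
\]
where the last equality uses the skew-symmetry of $J_{Z_m}$.

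The heart of the argument is then an integration by parts. Writing $\mathrm{div}_\mathcal{H}$ for the horizontal divergence (the $L^2(\mu)$-adjoint of $\nabla_\mathcal{H}$, for which $\int_\M\langle\nabla_\mathcal{H}u, W\rangle\,d\mu = -\int_\M u\,\mathrm{div}_\mathcal{H}W\,d\mu$), I apply it with $u = Z_mf$ and $W = J_{Z_m}\nabla_\mathcal{H}f$, reducing matters to computing $\mathrm{div}_\mathcal{H}(J_{Z_m}\nabla_\mathcal{H}f) = \sum_i\langle\nabla_{X_i}(J_{Z_m}\nabla_\mathcal{H}f), X_i\rangle$. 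Differentiating the product splits this into a $(\nabla_{X_i}J_{Z_m})$-term and a $J_{Z_m}(\nabla_{X_i}\nabla_\mathcal{H}f)$-term. Because $\nabla g_\varepsilon = 0$ and $\nabla Z_m = 0$, the first term equals $\sum_{i,k}(X_kf)\langle Z_m, (\nabla_{X_i}T)(X_k, X_i)\rangle$, which vanishes identically by the Yang-Mills condition $\sum_i(\nabla_{X_i}T)(X_i, \cdot)=0$; this is the single place the hypothesis on $\M$ enters. For the second term I would use that the antisymmetric part of the Hessian is governed by the torsion, $\nabla^2f(X_i,X_k) - \nabla^2 f(X_k,X_i) = -df(T(X_i,X_k))$, together with the fact that $\langle J_{Z_m}X_k, X_i\rangle = \langle Z_m, T(X_i,X_k)\rangle$ is itself antisymmetric in $(i,k)$; contracting leaves exactly $\frac{1}{2}\sum_{i,k,n}(Z_nf)\langle Z_n, T(X_i,X_k)\rangle\langle Z_m, T(X_i,X_k)\rangle$. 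Substituting back and summing over $m$, the product of two torsion factors reassembles precisely into $\frac{1}{2}\mathbf{Tr}(J^*_{\nabla_\mathcal{V}f}J_{\nabla_\mathcal{V}f})$, which establishes the displayed cross-term identity and hence the Lemma.

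I expect the divergence computation to be the main obstacle: one must differentiate $J_{Z_m}$ as a tensor, correctly identify its covariant derivative with $\nabla T$ so that the Yang-Mills condition can be invoked, and track the antisymmetrization that converts the Hessian contribution into a sum of squared torsion coefficients. The remaining steps — the orthogonal splitting, the algebraic reduction to the single cross-term identity, and the frame computation of the vertical components — will be routine bookkeeping once the sign conventions for $J_Z$, for $T$, and for the $g_{2\varepsilon}$-metric are fixed.
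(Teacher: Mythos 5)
Your proof is correct and follows essentially the same route as the paper: after splitting the $g_{2\varepsilon}$-norm into horizontal and vertical parts, both arguments reduce the lemma to the single cross-term identity $2\varepsilon\int_\M \langle \nabla_\mathcal{H} df , \mathfrak{T}_\mathcal{H}^\varepsilon (df) \rangle_{\mathcal{V}^*}=\frac{1}{2} \int_\M \mathbf{Tr} ( J^*_{\nabla_\mathcal{V}f} J_{\nabla_\mathcal{V}f})$, which is exactly the paper's equation \eqref{IPP}, the paper's ``write the cross term as $\tfrac{3}{2}$ of itself minus $\tfrac{1}{2}$ of itself and complete the square'' being algebraically identical to your direct expansion of the two quadratics. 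The only difference is one of detail: the paper asserts \eqref{IPP} in a single line (``the definition of $\mathfrak{T}_\mathcal{H}^\varepsilon$ together with the Yang-Mills assumption''), whereas you actually prove it, and your computation --- the frame identification of the vertical components, the killing of the $\nabla J$ term by the Yang-Mills condition, and the conversion of the antisymmetric part of the Hessian into torsion terms that reassemble into $\mathbf{Tr}(J^*_{\nabla_\mathcal{V}f}J_{\nabla_\mathcal{V}f})$ --- is precisely the argument the paper suppresses.
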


\begin{proof}
Using the definition $\mathfrak{T}_\mathcal{H}^\varepsilon $ together with the Yang-Mills assumption, we see that
\begin{align}\label{IPP}
\int_\M \langle \nabla_\mathcal{H} df , \mathfrak{T}_\mathcal{H}^\varepsilon (df) \rangle_{\mathcal{V}^*}=\frac{1}{4 \varepsilon} \int_\M \mathbf{Tr} ( J^*_{\nabla_\mathcal{V}f} J_{\nabla_\mathcal{V}f}).
\end{align}
As a consequence, we obtain
\begin{align}\label{IPP2}
 & \int_\M \| \nabla_{\mathcal{H}} df  -\mathfrak{T}_\mathcal{H}^\varepsilon  df \|_{2\varepsilon}^2 \notag  \\
 =&\int_\M \| \nabla_{\mathcal{H}} df  -\mathfrak{T}_\mathcal{H}^\varepsilon  df \|_{\mathcal{H}^*}^2+2 \varepsilon \int_\M \| \nabla_{\mathcal{H}} df  -\mathfrak{T}_\mathcal{H}^\varepsilon  df \|_{\mathcal{V}^*}^2 \notag  \\
 =&\int_\M \| \nabla_{\mathcal{H}} df  -\mathfrak{T}_\mathcal{H}^\varepsilon  df \|_{\mathcal{H}^*}^2+2 \varepsilon \int_\M \| \nabla_{\mathcal{H}} df   \|_{\mathcal{V}^*}^2 -4 \varepsilon \int_\M \langle \nabla_\mathcal{H} df , \mathfrak{T}_\mathcal{H}^\varepsilon (df) \rangle_{\mathcal{V}^*} +2 \varepsilon \int_\M \| \mathfrak{T}_\mathcal{H}^\varepsilon df   \|_{\mathcal{V}^*}^2
\end{align}
By using \eqref{IPP}, the trick is now to write
\begin{align*}
\int_\M \langle \nabla_\mathcal{H} df , \mathfrak{T}_\mathcal{H}^\varepsilon (df) \rangle_{\mathcal{V}^*}&=\frac{3}{2} \int_\M \langle \nabla_\mathcal{H} df , \mathfrak{T}_\mathcal{H}^\varepsilon (df) \rangle_{\mathcal{V}^*} -\frac{1}{2} \int_\M \langle \nabla_\mathcal{H} df , \mathfrak{T}_\mathcal{H}^\varepsilon (df) \rangle_{\mathcal{V}^*} \\
 &=\frac{3}{2} \int_\M \langle \nabla_\mathcal{H} df , \mathfrak{T}_\mathcal{H}^\varepsilon (df) \rangle_{\mathcal{V}^*} - \frac{1}{8 \varepsilon} \int_\M \mathbf{Tr} ( J^*_{\nabla_\mathcal{V}f} J_{\nabla_\mathcal{V}f}).
\end{align*}
Coming back to \eqref{IPP2} and completing the squares gives
\begin{align*}
\int_\M \| \nabla_{\mathcal{H}} df  -\mathfrak{T}_\mathcal{H}^\varepsilon  df \|_{2\varepsilon}^2 =& \int_\M \| \nabla_{\mathcal{H}} df  -\mathfrak{T}_\mathcal{H}^\varepsilon  df \|_{\mathcal{H}^*}^2 +2\varepsilon \int_\M \left\| \nabla_{\mathcal{H}} df  -\frac{3}{2}\mathfrak{T}_\mathcal{H}^\varepsilon  df \right\|_{\mathcal{V}^*}^2 \\
 & +\frac{1}{2} \int_\M \mathbf{Tr} ( J^*_{\nabla_\mathcal{V}f} J_{\nabla_\mathcal{V}f})-\frac{5}{2}\varepsilon \int_\M \|\mathfrak{T}_\mathcal{H}^\varepsilon  df \|_{\mathcal{V}^*}^2.
 \end{align*}
\end{proof}

We are now in position to complete the proof of Theorem \ref{lichne}.

\begin{proof}
Using the previous Lemma, Cauchy-Schwarz inequality and the assumptions
\[
\langle \mathfrak{Ric}_{\mathcal{H}}(\eta),\eta \rangle_{\mathcal{H}^*} \ge \rho_1 \| \eta \|^2_{\mathcal{H}^*},\quad  \left\langle -\mathbf{J}^2(\eta), \eta \right\rangle_{\mathcal{H}^*} \le \kappa \| \eta \|^2_{\mathcal{H}^*}, \quad \mathbf{Tr} ( J_Z^* J_Z) \ge \rho_2 \| Z \|_\mathcal{V}^2
\]
we get the lower bound
\[
\int_\M \| \nabla_{\mathcal{H}} df  -\mathfrak{T}_\mathcal{H}^\varepsilon  df \|_{2\varepsilon}^2 \ge \frac{1}{d}\int_\M  (Lf)^2 +\frac{3}{4} \rho_2 \int_\M  \| df \|_{\mathcal{V}^*}^2-\frac{5}{8 \varepsilon} \kappa \int_\M  \| df \|_{\mathcal{H}^*}^2.
\]
From \eqref{integratedCD}, we know that
\begin{align}\label{BB}
\int_\M (Lf)^2 -2\varepsilon \int_\M \langle  d(Lf) , df \rangle_{\mathcal{V}^*}   \ge \int_\M \| \nabla_{\mathcal{H}} df  -\mathfrak{T}_\mathcal{H}^\varepsilon  df \|_{2\varepsilon}^2 +\left( \rho_1-\frac{\kappa}{2\varepsilon}\right)\int_\M \| df \|_{\mathcal{H}^*}^2 .
\end{align}
We thus deduce
\[
\frac{d-1}{d} \int_\M (Lf)^2 -2\varepsilon \int_\M \langle  d(Lf) , df \rangle_{\mathcal{V}^*}  \ge \left( \rho_1-\frac{9\kappa}{8\varepsilon}\right)\int_\M \| df \|_{\mathcal{H}^*}^2+\frac{3}{4} \rho_2 \int_\M  \| df \|_{\mathcal{V}^*}^2.
\]
Now if $f$ satisfies $Lf =-\lambda_1 f$, we get
\[
\frac{d-1}{d} \lambda_1^2 \int_\M f^2 +2\varepsilon \lambda_1 \int_\M  \| df \|_{\mathcal{V}^*}^2  \ge \left( \rho_1-\frac{9\kappa}{8\varepsilon}\right)\lambda_1 \int_\M f^2+\frac{3}{4} \rho_2 \int_\M  \| df \|_{\mathcal{V}^*}^2.
\]
Choosing $\varepsilon$ such that
\[
2\varepsilon \lambda_1 =\frac{3}{4} \rho_2,
\]
the desired lower bound on $\lambda_1$ is obtained.
\end{proof}

\section{The Obata sphere theorem on $H$-type manifolds}

In this section we prove the optimality of the lower bound for the first eigenvalue of the sub-Laplacian on a special class of Yang-Mills manifolds by obtaining a rigidity result in the spirit of the Obata sphere theorem.

We first introduce the following definition inspired from the notion of $H$-type groups that was introduced by  Kaplan \cite{Kaplan}.

\begin{definition}
Let $\M$ be a sub-Riemannian manifold with transverse symmetries of Yang-Mills type. We will say that $\M$ is of $H$-type if for every $Z \in \mathcal{V}$, $\| Z \|_\mathcal{V}=1$, the map $J_Z$ is orthogonal, that is, $ \langle J_Z (X),J_Z(Y) \rangle_\Ho = \langle X,Y \rangle_\Ho $ for $X,Y\in \Ho(x)$.
\end{definition}

Sasakian or 3-Sasakian manifolds are examples of $H$-type manifolds. If $\M$ is a $H$-type sub-Riemannian manifold,  it is immediate from the definition that for $Z,Z' \in \mathcal{V}$,
\[
J_Z J_{Z'}+J_{Z'}J_Z =-2 \langle Z, Z' \rangle_\mathcal{V} \mathbf{Id}_{\mathcal{H}}.
\]
In particular, we have
\[
J^2_Z=-\|Z\|^2_{\mathcal{V} } \mathbf{Id}_{\mathcal{H}}.
\]

In this section, we prove the following result:

\begin{theorem}\label{obata}
Let $\M$ be a compact sub-Riemannian manifold of $H$-type. Assume  that for every smooth horizontal one-form $\eta$,
\[
\langle \mathfrak{Ric}_{\mathcal{H}}(\eta),\eta \rangle_{\mathcal{H}^*} \ge \rho \| \eta \|^2_{\mathcal{H}^*},
\]
with $\rho>0$, then  the first eigenvalue $\lambda_1$ of the sub-Laplacian $-L$  satisfies
\begin{align*}
 \lambda_1 \geq  \frac{\rho d}{d-1+3 \di }.
\end{align*}
Moreover, if $\lambda_1 =  \frac{\rho d}{d-1+3 \di }$, then $\M$ is equivalent to a 1-Sasakian sphere $ \mathbb{S}^{2m+1} ( r )$ or a 3-Sasakian sphere $ \mathbb{S}^{4m+3}( r )$ for some $r>0$ and $m \ge 1$.
\end{theorem}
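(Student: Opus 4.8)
The plan is to separate the inequality from the rigidity. The inequality is immediate from Theorem~\ref{lichne}: on an $H$-type manifold each unit $Z\in\V$ gives $J_Z^2=-\mathbf{Id}_\Ho$, so for a $g_\V$-orthonormal basis $(Z_m)$ one has $-\mathbf{J}^2=\di\,\mathbf{Id}_\Ho$ and $\mathbf{Tr}(J_Z^*J_Z)=d\|Z\|_\V^2$. Thus $\rho_1=\rho$, $\kappa=\di$, $\rho_2=d$ are admissible in Theorem~\ref{lichne}, and $\lambda_1\ge\rho_1/(1-\tfrac1d+3\kappa/\rho_2)$ becomes exactly $\lambda_1\ge\rho d/(d-1+3\di)$.

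For the rigidity I would take a $\lambda_1$-eigenfunction $f$ realizing equality and retrace the proof of Theorem~\ref{lichne}, so that every inequality used becomes a pointwise equality, for every element of the $\lambda_1$-eigenspace $E$. This yields: (a) $\mathfrak{Ric}_\Ho(df)=\rho\,df$; (b) equality in \eqref{CS}, forcing the symmetric horizontal Hessian to be pure trace, $\mathrm{Sym}(\nabla_\Ho df)=-\tfrac{\lambda_1}{d}f\,g_\Ho$ (its antisymmetric part is automatically the torsion term $\tfrac12\sum_m(Z_mf)J_{Z_m}$, which is what produces the $\tfrac14\rho_2\|df\|_{\V^*}^2$ in \eqref{CS}); and (c) the vanishing of the completed square in Lemma~\ref{LMM}, which after choosing the extension parameter $\varepsilon_0=\tfrac23\varepsilon_*$ (with $\varepsilon_*$ the optimizer $2\varepsilon_*\lambda_1=\tfrac34 d$, i.e. $\varepsilon_0=d/4\lambda_1$) reads $\nabla_\Ho(Z_mf)=-\tfrac1{2\varepsilon_0}J_{Z_m}\nabla_\Ho f$.

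I would then compute the Hessian $\tilde\nabla^2 f$ of the Levi-Civita connection $\tilde\nabla$ of $g_{\varepsilon_0}$ blockwise, using that the contorsion of $\tilde\nabla$ relative to the connection $\nabla$ of \cite{BG} is controlled by $T$ and the $J_{Z_m}$: from $T(X,Z)=0$ and $\nabla Z=0$ one gets $\tilde\nabla_{X_i}X_j=\nabla_{X_i}X_j-\tfrac12\sum_m(J_{Z_m})_{ij}Z_m$ and $\tilde\nabla_{X_i}Z_m=\tfrac1{2\varepsilon_0}J_{Z_m}X_i$. Hence the $\Ho\Ho$ block equals $\mathrm{Sym}(\nabla_\Ho df)=-\alpha f\,g_\Ho$ with $\alpha=\lambda_1/d$, and the choice of $\varepsilon_0$ makes the mixed $\Ho\V$ block vanish. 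For the $\V\V$ block I would exploit that each $Z_m$ is Killing, hence commutes with $L$, so $Z_mf\in E$ and (c) applies to it; iterating gives $\nabla_\Ho(Z_nZ_mf)=\tfrac1{4\varepsilon_0^2}J_{Z_n}J_{Z_m}\nabla_\Ho f$, and symmetrizing with the $H$-type relation $J_{Z_n}J_{Z_m}+J_{Z_m}J_{Z_n}=-2\delta_{mn}\mathbf{Id}_\Ho$ (plus $Lf=-\lambda_1 f$ to fix the additive constant) gives $Z_m^2f=-\tfrac1{4\varepsilon_0^2}f$ and $Z_mZ_nf+Z_nZ_mf=0$ for $m\ne n$. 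Because $\alpha=\tfrac1{4\varepsilon_0}$, these identities together with the induced Lie structure on $\V$ produce the $\V\V$ block $-\alpha f\,g_{\varepsilon_0}$ as well, so $\tilde\nabla^2 f=-\alpha f\,g_{\varepsilon_0}$ with $\alpha>0$, and Obata's theorem forces $(\M,g_{\varepsilon_0})\cong\mathbb{S}^{d+\di}(1/\sqrt\alpha)$.

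I expect the $\V\V$ block to be the main obstacle: the integrated Bochner identity directly controls only the horizontal and mixed parts, and the vertical part must be bootstrapped from the fact that $Z_mf$ is again extremal and then matched against $\tilde\nabla$ on the fibers. Pleasantly, the same identities classify $\di$: on $E$ the operators $2\varepsilon_0 Z_m$ satisfy $(2\varepsilon_0 Z_m)^2=-\mathbf{Id}$ and anticommute, so they represent the Clifford algebra $\mathrm{Cl}(\di)$, while requiring $\V=\mathrm{span}(Z_m)$ to be a Lie algebra forces each product $Z_mZ_n$ into $\mathrm{span}(Z_p)$ and the Jacobi identity to hold. By the Hurwitz dichotomy only $\di=1$ (complex, $\V$ abelian) and $\di=3$ (quaternionic, $\V\cong\mathfrak{su}(2)$ with totally antisymmetric structure constants, so $g_\V$ is bi-invariant and $\tilde\nabla_{Z_m}Z_n=\tfrac12[Z_m,Z_n]$, retroactively justifying the $\V\V$ computation) survive; the octonionic $\di=7$ is excluded because non-associativity breaks Jacobi. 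Matching dimensions then identifies $\M$ with $\mathbb{S}^{2m+1}(r)$ (Sasakian, $\di=1$) or $\mathbb{S}^{4m+3}(r)$ ($3$-Sasakian, $\di=3$).
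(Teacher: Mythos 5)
Your inequality step and the analytic core of your rigidity argument coincide with the paper's proof: equality forces, via Lemma \ref{LMM} and the equality case of Cauchy--Schwarz, the symmetric horizontal Hessian to be pure trace and the mixed Hessian to satisfy $X(Zf)=\frac{2\lambda_1}{d}(J_ZX)f$; since every $Z\in\V$ commutes with $L$, these identities apply to $Zf$ as well, which bootstraps the vertical relations $\frac{1}{2}(ZZ'+Z'Z)f=-\frac{4\lambda_1^2}{d^2}\langle Z,Z'\rangle_\V f$ and then the full Riemannian Hessian equation $\tilde\nabla^2f=-\frac{\lambda_1}{d}f\,g$, to which Obata \cite{obata} applies. (Your vertical scaling $4\lambda_1/d$ is indeed the one that kills the mixed block and matches the vertical block; the value of $\varepsilon$ printed in the paper gives the reciprocal scaling and appears to be a slip, so this is a point in your favor.) Where you genuinely deviate is the classification of $\di$: the paper restricts \eqref{hessiangroup} to the orbits of the group $\mathbb{G}$ generated by $\V$ and applies Obata a second time, concluding that $\mathbb{G}$ is itself a round sphere and hence $\mathbf{U}(1)$ or $\mathbf{SU}(2)$, while you work with Clifford relations on the eigenspace $E$. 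Your route can be made rigorous, but not by invoking ``Hurwitz'': since $Z_mZ_n=\frac{1}{2}[Z_m,Z_n]$ on $E$ for $m\neq n$ and $Z_m^2=-\beta\,\mathbf{Id}$ on $E$ with $\beta=4\lambda_1^2/d^2$, the span $\mathcal{A}$ of $\mathbf{Id}$ and the $Z_m|_E$ is a finite-dimensional associative real algebra in which every nonzero element $a\,\mathbf{Id}+\sum_mb_mZ_m$ is invertible (its product with $a\,\mathbf{Id}-\sum_mb_mZ_m$ equals $\left(a^2+\beta\sum_mb_m^2\right)\mathbf{Id}$), so Frobenius' theorem on real division algebras gives $1+\di\in\{1,2,4\}$; octonions and the Jacobi identity never enter, because an algebra of operators is automatically associative.

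The genuine gap is your final sentence. The theorem asserts an equivalence of sub-Riemannian structures --- a diffeomorphism carrying $\Ho$ isometrically onto the standard horizontal distribution and $\V$ isomorphically onto the standard vertical algebra --- and this does not follow from knowing that $(\M,g_{\varepsilon_0})$ is isometric to a round sphere and that $\di\in\{1,3\}$. What is still missing is the identification of the vertical foliation, i.e.\ of the fibration of the sphere by $\mathbb{G}$-orbits, with the Hopf fibration ($\di=1$) or the quaternionic Hopf fibration ($\di=3$) up to congruence; nothing you have proved rules out, a priori, some other isometric totally geodesic fibration of $\mathbb{S}^{d+\di}(r)$ by circles or by $3$-spheres. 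The paper closes exactly this hole: $\mathbb{G}$ acts properly on $\M$ (by the definition of transverse symmetries), the quotient $\mathbb{G}\to\M\to\M/\mathbb{G}$ is a Riemannian submersion with totally geodesic fibers for the Obata metric (your identity $\tilde\nabla_{Z_m}Z_n=\frac{1}{2}[Z_m,Z_n]$ is precisely the statement that the fibers are totally geodesic), and then Escobales' classification \cite{escobales} of Riemannian submersions of the round sphere with connected totally geodesic fibers shows that this fibration is congruent to a Hopf fibration. Without Escobales' theorem, or an equivalent congruence argument, ``matching dimensions'' only matches dimensions; your proof stops one step short of the stated conclusion.
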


To put things in perspective, we pause a little and describe the sub-Riemannian geometry of the 1 and 3 Sasakian spheres (see for instance \cite{BW1,BW3} for more details) and precise what we mean by equivalent in the previous theorem.
\begin{itemize}
\item The sub-Riemannian geometry of the standard 1-Sasakian sphere $\mathbb{S}^{2m+1}(1)$  is induced from the Riemannian structure of the complex projective space $\mathbb{CP}^m$ by  the Hopf fibration $ \mathbf{U}(1) \to \mathbb{S}^{2m+1} \to \mathbb{CP}^m$. The sub-Laplacian $L$ is then the lift of the Laplace-Beltrami operator on $\mathbb{CP}^m$. In that case, $\lambda_1=2m$.
\item The sub-Riemannian geometry of the standard 3-Sasakian sphere $\mathbb{S}^{4m+3}$  is induced from the Riemannian structure of the quaternionic projective space $\mathbb{HP}^m$ by  the quaternionic Hopf fibration $ \mathbf{SU}(2) \to \mathbb{S}^{4m+3} \to \mathbb{HP}^m$. The sub-Laplacian $L$ is then the lift of the Laplace-Beltrami operator on  $\mathbb{HP}^n$. In that case, $\lambda_1=m$.

\end{itemize}

In the previous theorem, we use the following notion of equivalence for sub-Riemannian manifolds with transverse symmetries: Two sub-Riemannian manifolds with transverse symmetries $(\M_1, \mathcal{H}_1, \mathcal{V}_1)$ and $(\M_2, \mathcal{H}_2, \mathcal{V}_2)$  are said to be equivalent if there exists a diffeomorphism $\M_1 \to \M_2$ that induces an isometry between the horizontal distributions $\mathcal{H}_1$ and $\mathcal{H}_2$ and a Lie algebra isomorphism between $\mathcal{V}_1$ and $\mathcal{V}_2$.



 \

We now discuss the cases that were already known  in the literature. As we pointed out Sasakian manifolds are of $H$-type. In that case $\di=1$ and the  lower bound becomes
\[
 \lambda_1 \geq  \frac{\rho d}{d+2 }.
\]
This estimate was  obtained by Greenleaf \cite{Gr} (see also \cite{barletta}). The estimate is optimal and the corresponding Obata's type rigidity result was obtained in \cite{Chiu} (see also \cite{IPV1} and \cite{Li}).

The other case that was studied in the literature is the case of 3-Sasakian manifolds for which $\di=3$. The lower bound is then
\[
 \lambda_1 \geq  \frac{\rho d}{d+8 }.
\]
This bound was proved in \cite{IPV,IPV3} and the corresponding rigidity result was obtained in \cite{IPV4}.

\

 We now turn to the proof of Theorem \ref{obata}. From now on, in the sequel, $\M$ will be a compact sub-Riemannian manifold of $H$-type such that for every smooth horizontal one-form $\eta$,
\[
\langle \mathfrak{Ric}_{\mathcal{H}}(\eta),\eta \rangle_{\mathcal{H}^*} \ge \rho \| \eta \|^2_{\mathcal{H}^*},
\]
with $\rho>0$. Since $\M$ is of $H$-type, we have
\[
\left\langle -\mathbf{J}^2(\eta), \eta \right\rangle_{\mathcal{H}^*} = \di \| \eta \|^2_{\mathcal{H}^*},
\]
and  for every $Z \in \mathcal{V}$,
\[
\mathbf{Tr} ( J_Z^* J_Z) = d \| Z \|_\mathcal{V}^2.
\]
From Theorem  \ref{lichne}, we get therefore the lower bound

\begin{align*}
 \lambda_1 \geq  \frac{\rho d}{d-1+3 \di }.
\end{align*}

The key lemma in our rigidity result is the following result:

\begin{lemma}
Let $f \in C^\infty(\M)$ such that $Lf=-\lambda_1 f$ with $ \lambda_1 =  \frac{\rho d}{d-1+3 \di }$. Then $f$ satisfies
\begin{align} \label{equ:Hessian extremal}
 \nabla^2 f(X,Y) = -\frac{\lambda_1}{d} f  \langle X,Y \rangle_\mathcal{H} -\frac{1}{2} T(X,Y) f ,\quad \forall X,Y\in\mathcal H .
\end{align}
and
\begin{align} \label{equ:extremal equations}
\nabla^2 f (X,Z)=\frac{2 \lambda_1}{\rho_2} J_Z(X)f, \quad \forall X \in\mathcal H, Z \in \mathcal{V} .
\end{align}
\end{lemma}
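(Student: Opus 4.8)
The plan is to read off \eqref{equ:Hessian extremal} and \eqref{equ:extremal equations} as the equality conditions in the chain of inequalities that produced the Lichnerowicz bound of Theorem \ref{lichne}, specialized to the $H$-type setting where $\kappa=\di$ and $\rho_2=d$. If $f$ is an eigenfunction with $Lf=-\lambda_1 f$ and $\lambda_1=\frac{\rho d}{d-1+3\di}$, I would run the argument of Theorem \ref{lichne} with $\varepsilon$ chosen so that $2\varepsilon\lambda_1=\frac34\rho_2$. Since the resulting estimate is then an equality, every inequality used along the way must be an equality; because each such inequality is either a pointwise inequality integrated or the integral of a pointwise nonnegative quantity, continuity upgrades these to pointwise identities on $\M$. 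Exactly two of these carry the geometric content needed here: the Cauchy--Schwarz bound \eqref{CS}, and the discarding of the nonnegative term $2\varepsilon\int_\M\|\nabla_\Ho df-\frac32\mathfrak{T}_\Ho^\varepsilon df\|_{\V^*}^2$ from Lemma \ref{LMM}.

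First I would treat the horizontal--horizontal block. In a local adapted frame the $\Ho^*\otimes\Ho^*$ component of $\nabla_\Ho df-\mathfrak{T}_\Ho^\varepsilon df$ has entries $N_{ij}=\nabla^2 f(X_i,X_j)+T(X_i,X_j)f$, whose trace equals $Lf$ since $T$ is skew. Splitting $N$ into symmetric and antisymmetric parts, the symmetric part is the symmetrized horizontal Hessian (the torsion term drops), while the antisymmetric part equals $\frac12 T(X_i,X_j)f$ by the universal identity $\nabla^2 f(X,Y)-\nabla^2 f(Y,X)=-T(X,Y)f$ valid for any connection with torsion. These contribute orthogonally to $\|N\|^2$, and \eqref{CS} is precisely $\|N^{\mathrm{sym}}\|^2\ge\frac1d(Lf)^2$ together with $\|N^{\mathrm{antisym}}\|^2=\frac14\mathbf{Tr}(J^*_{\nabla_\V f}J_{\nabla_\V f})\ge\frac14\rho_2\|df\|_{\V^*}^2$, the latter being an equality in the $H$-type case. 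Equality in the former forces $N^{\mathrm{sym}}=\frac{Lf}{d}\mathbf{Id}_\Ho$, so that the symmetric part of the horizontal Hessian is $-\frac{\lambda_1}{d}f\langle\cdot,\cdot\rangle_\Ho$; adding back the antisymmetric part $-\frac12 T(X,Y)f$ yields exactly \eqref{equ:Hessian extremal}.

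Next I would extract \eqref{equ:extremal equations} from the vanishing of the discarded term, which forces the $\V^*$ component of $\nabla_\Ho df-\frac32\mathfrak{T}_\Ho^\varepsilon df$ to vanish pointwise. Computing that component in a frame, using $\nabla Z=0$ for $Z\in\V$ so that $(\nabla_{X_i}df)(Z_m)=\nabla^2 f(X_i,Z_m)$, and the $\V^*$ part $\frac1{2\varepsilon}df(J_{Z_m}X_i)$ of $\mathfrak{T}^\varepsilon_{X_i}df$, the condition reads $\nabla^2 f(X_i,Z_m)=\frac{3}{4\varepsilon}J_{Z_m}(X_i)f$. Since the choice $2\varepsilon\lambda_1=\frac34\rho_2$ gives $\frac{3}{4\varepsilon}=\frac{2\lambda_1}{\rho_2}$, this is precisely \eqref{equ:extremal equations}.

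The main obstacle will be bookkeeping rather than any new idea: one must correctly isolate the $\Ho^*$ and $\V^*$ components of $\nabla_\Ho df$ and $\mathfrak{T}^\varepsilon_\Ho df$, maintain the orthogonal symmetric/antisymmetric splitting so that the torsion enters with the correct factor $\frac12$, and verify that the integrated equalities descend to pointwise identities. One should also record that in the $H$-type case the inequalities stemming from $-\mathbf{J}^2$ and from $\mathbf{Tr}(J_Z^*J_Z)$ are automatically equalities, so they impose no further constraint and the equality analysis outputs exactly \eqref{equ:Hessian extremal} and \eqref{equ:extremal equations}.
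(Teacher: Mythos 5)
Your proposal is correct and follows essentially the same route as the paper: the paper also runs the Lichnerowicz argument with $2\varepsilon\lambda_1=\tfrac34\rho_2$, uses the identity $\|\nabla_\Ho df-\mathfrak{T}^\varepsilon_\Ho df\|^2_{\Ho^*}=\|\nabla^{2,\#}_\Ho f\|^2+\tfrac14\mathbf{Tr}(J^*_{\nabla_\V f}J_{\nabla_\V f})$ (your symmetric/antisymmetric splitting), and concludes by forcing equality in Cauchy--Schwarz for $\nabla^{2,\#}_\Ho f$ together with the pointwise vanishing of $\|\nabla_\Ho df-\tfrac32\mathfrak{T}^\varepsilon_\Ho df\|_{\V^*}$. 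Your computations spelling out how these two equality cases yield \eqref{equ:Hessian extremal} and \eqref{equ:extremal equations} (including $\tfrac{3}{4\varepsilon}=\tfrac{2\lambda_1}{\rho_2}$) are exactly what the paper's closing ``this immediately implies'' abbreviates.
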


 \begin{proof}
 From \eqref{BB} we have
 \[
 \int_\M (Lf)^2 -2\varepsilon \int_\M \langle  d(Lf) , df \rangle_{\mathcal{V}^*}   \ge \int_\M \| \nabla_{\mathcal{H}} df  -\mathfrak{T}_\mathcal{H}^\varepsilon  df \|_{2\varepsilon}^2 +\left( \rho_1-\frac{\kappa}{2\varepsilon}\right)\int_\M \| df \|_{\mathcal{H}^*}^2,
 \]
 and thus, since $Lf=-\lambda_1 f$,
 \begin{align}\label{STT}
 \lambda_1^2 \int_\M f^2 +2\lambda_1 \varepsilon \int_\M \| d f\|_{\mathcal{V}^*}^2   \ge \int_\M \| \nabla_{\mathcal{H}} df  -\mathfrak{T}_\mathcal{H}^\varepsilon  df \|_{2\varepsilon}^2 +\lambda_1 \left( \rho_1-\frac{\kappa}{2\varepsilon}\right)\int_\M f^2.
 \end{align}
 On the other hand, from Lemma \ref{LMM}, we have
 \begin{align*}
\int_\M \| \nabla_{\mathcal{H}} df  -\mathfrak{T}_\mathcal{H}^\varepsilon  df \|_{2\varepsilon}^2 \ge & \int_\M \| \nabla_{\mathcal{H}} df  -\mathfrak{T}_\mathcal{H}^\varepsilon  df \|_{\mathcal{H}^*}^2 +2\varepsilon \int_\M \left\| \nabla_{\mathcal{H}} df  -\frac{3}{2}\mathfrak{T}_\mathcal{H}^\varepsilon  df \right\|_{\mathcal{V}^*}^2 \\
 & +\frac{\rho_2}{2} \int_\M \| d f \|_{\mathcal{V}^*}^2-\frac{5}{8 \varepsilon} \int_\M \| df \|_{\mathcal{H}^*}^2.
\end{align*}
It is readily checked that
\[
 \| \nabla_{\mathcal{H}} df  -\mathfrak{T}_\mathcal{H}^\varepsilon  df \|_{\mathcal{H}^*}^2= \| \nabla^{2,\#}_\mathcal{H} f \|^2+\frac{1}{4} \mathbf{Tr} ( J^*_{\nabla_\mathcal{V}f} J_{\nabla_\mathcal{V}f}),
\]
where $ \nabla^{2,\#}_\mathcal{H} f$ denotes the symmetrization of the horizontal Hessian of $f$. Thus we have
 \begin{align*}
\int_\M \| \nabla_{\mathcal{H}} df  -\mathfrak{T}_\mathcal{H}^\varepsilon  df \|_{2\varepsilon}^2 \ge & \int_\M \| \nabla^{2,\#}_\mathcal{H} f \|^2 +2\varepsilon \int_\M \left\| \nabla_{\mathcal{H}} df  -\frac{3}{2}\mathfrak{T}_\mathcal{H}^\varepsilon  df \right\|_{\mathcal{V}^*}^2  +\frac{3\rho_2}{4} \int_\M \| d f \|_{\mathcal{V}^*}^2-\frac{5}{8 \varepsilon} \lambda_1  \int_\M f^2.
\end{align*}
Choosing $\varepsilon$ such that $2\lambda_1 \varepsilon=\frac{3\rho_2}{4} $ and using the last inequality in \eqref{STT} gives eventually
\[
\left(  \lambda_1^2 -\lambda_1 \left( \rho_1-\frac{9\kappa}{8\varepsilon}\right) \right) \int_\M f^2 \ge  \int_\M \| \nabla^{2,\#}_\mathcal{H} f \|^2+2\varepsilon \int_\M \left\| \nabla_{\mathcal{H}} df  -\frac{3}{2}\mathfrak{T}_\mathcal{H}^\varepsilon  df \right\|_{\mathcal{V}^*}^2.
 \]
 From Cauchy-Schwarz inequality, given the value of $\lambda_1$, we always have
 \[
\left(  \lambda_1^2 -\lambda_1 \left( \rho_1-\frac{9\kappa}{8\varepsilon}\right) \right) \int_\M f^2 \le  \int_\M \| \nabla^{2,\#}_\mathcal{H} f \|^2
\]
This means that, necessarily
\[
\left\| \nabla_{\mathcal{H}} df  -\frac{3}{2}\mathfrak{T}_\mathcal{H}^\varepsilon  df \right\|_{\mathcal{V}^*}=0,
\]
and moreover that $\nabla^{2,\#}_\mathcal{H} f $ is a multiple of $g_\mathcal{H}$. This immediately implies \eqref{equ:Hessian extremal} and \eqref{equ:extremal equations}.
 \end{proof}

We are now in position to prove Theorem \ref{obata}.

\begin{proof}
Let $f \in C^\infty(\M)$ such that $Lf=-\lambda_1 f$ with $ \lambda_1 =  \frac{\rho d}{d-1+3 \di }$. From the previous lemma, we have
\begin{align*}
 \nabla^2 f(X,Y) = -\frac{\lambda_1}{d} f  \langle X,Y \rangle_\mathcal{H} -\frac{1}{2} T(X,Y) f ,\quad \forall X,Y\in\mathcal H .
\end{align*}
and
\begin{align*}
\nabla^2 f (X,Z)=\frac{2 \lambda_1}{d} J_Z(X)f, \quad \forall X \in\mathcal H, Z \in \mathcal{V} .
\end{align*}
The trick is now that, since $\M$ has transverse symmetries, $-L$ commutes with any $Z \in \mathcal{V}$ (see \cite{BG}), and thus $Zf$ is also an eigenfunction for the same eigenvalue $\lambda_1$. In particular $Zf$ also satisfies the equation  \eqref{equ:extremal equations}. This gives for a horizontal vector field $X$ and $Z \in \mathcal{V}$,
\[
\nabla^3f(X,Z,Z)=\frac{4 \lambda^2_1}{d^2}J^2_Z(X)f.
\]
From the $H$-type assumption, we deduce
\[
\nabla^3f(X,Z,Z)=-\frac{4 \lambda^2_1}{d^2}\| Z \|^2_\mathcal{V} Xf.
\]
Taking the trace and using the fact that both $f$ and $Zf$ are eigenfunctions of $-L$ with the same eigenvalue, we deduce that for any $Z \in \mathcal{V}$,
\[
Z^2 f=-\frac{4 \lambda^2_1}{d^2}\| Z \|^2_\mathcal{V} f.
\]
By polarization, it also implies that for every $Z,Z' \in \mathcal{V}$,
\begin{align}\label{hessiangroup}
\frac{1}{2} (ZZ' +Z'Z)f =-\frac{4 \lambda^2_1}{d^2} \langle Z,Z' \rangle_\mathcal{V} f.
\end{align}
Since $\M$ is compact, we easily see that $\mathcal{V}$ is a Lie algebra of compact type. We therefore can choose $g_\mathcal{V}$ to be a bi-invariant metric. We consider then the Riemannian metric on $\M$,
\[
g_{2 \varepsilon}=g_\mathcal{H} \oplus \frac{1}{2 \varepsilon} g_\mathcal{V},
\]
where $\varepsilon=\frac{2 \lambda_1}{d}$. By denoting $\tilde{\nabla}$ the Levi-Civita connection associated to $g_{2\varepsilon}$, it is then an easy exercise to check that the previous relations imply then that for every smooth vector fields $X,Y$
\[
\tilde{\nabla}^2 f(X,Y)=-\frac{\lambda_1}{d} f g_{2 \varepsilon}(X,Y).
\]
As a consequence of Obata's theorem \cite{obata}, we deduce that $(\M, g_{2\varepsilon})$ is isometric to a sphere. Also by the very same Obata's theorem, the relations \eqref{hessiangroup} imply that the Lie group $\mathbb{G}$ generated by $\mathcal{V}$ is a sphere itself. This implies that this group is either $\mathbf{U}(1)$ or $\mathbf{SU}(2)$. Morever, by the very definition of sub-Riemannian manifolds with transverse symmetries, $\mathbb{G}$ is seen to act properly on $\M$. We deduce that there is a Riemannian submersion with totally geodesic fibers
\[
\mathbb{G} \to \M \to \M / \mathbb{G}.
\]
The classification of Riemannian submersions with totally geodesic fibers  of the sphere that was done in Escobales \cite{escobales}  completes our proof.
\end{proof}


\begin{thebibliography}{10}

 \bibitem{ADE}
Aribi, A., Dragomir, S., El Soufi, A.: A lower bound on the spectrum of the sublaplacian. To appear in J. Geom. Anal.

\bibitem{barletta}
Barletta, E.: The Lichnerowicz theorem on CR manifolds.
Tsukuba J. Math. \textbf{31} (2007), no. 1, 77--97.

\bibitem{baudoin}
Baudoin, F.: Stochastic analysis on sub-Riemannian manifolds with transverse symmetries. Preprint, arXiv:1402.4490


 \bibitem{BG}
Baudoin, F., Garofalo, N.: Curvature-dimension inequalities and Ricci lower bounds for sub-Riemannian manifolds with transverse symmetries. Preprint, arXiv:1101.3590

\bibitem{BK}
Baudoin, F., Kim, B.: Sobolev, Poincar\'e and isoperimetric inequalities for subelliptic diffusion operators satisfying a generalized curvature dimension inequality. Rev. Mat. Iberoam. \textbf{30} (2014), no. 1, 109--131.

\bibitem{BW1}
Baudoin, F., Wang, J.: The subelliptic heat kernel on the CR sphere. Math. Z. \textbf{275} (2013), no. 1-2, 135--150.

\bibitem{BW2}
Baudoin, F., Wang, J.: Curvature dimension inequalities and subelliptic heat kernel gradient bounds on contact manifolds. Potential Anal. \textbf{40} (2014), no. 2, 163--193.

\bibitem{BW3}
Baudoin, F., Wang, J.: The subelliptic heat kernels of the quaternionic Hopf fibration. To appear in Potential Anal.

\bibitem{Chiu}
Chang, S.-C., Chiu, H.-L.: Nonnegativity of CR Paneitz operator and its application to the CR Obata's theorem. J. Geom. Anal. \textbf{19} (2009), no. 2, 261--287.

\bibitem{CR}
Dragomir, S., Tomassini, G.: Differential geometry and analysis on CR manifolds.
Progress in Mathematics, Vol. 246. Birkh\"auser, Boston, 2006.

\bibitem{escobales}
Escobales, R.: Riemannian submersions with totally geodesic fibers.
J. Differential Geom. \textbf{10} (1975), 253--276.

\bibitem{Gr}
Greenleaf, A.: The first eigenvalue of a sub-Laplacian on a pseudo-Hermitian manifold.
Comm. Partial Differential Equations \textbf{10} (1985), no. 2, 191--217.

\bibitem{Hl}
Hladky, R.: Bounds for the first eigenvalue of the horizontal Laplacian in positively curved sub-Riemannian manifolds.
Geom. Dedicata \textbf{164} (2013), 155--177.

\bibitem{H2}
Hladky, R.: Isometries of complemented subRiemannian manifolds. To appear in Advances in Geometry. 


\bibitem{IPV}
Ivanov, S., Petkov, A., Vassilev, D.: The sharp lower bound of the first eigenvalue of the sub-Laplacian on a quaternionic contact manifold in dimension seven.
Nonlinear Anal. \textbf{93} (2013), 51--61.

\bibitem{IPV3}
Ivanov, S., Petkov, A., Vassilev, D.: The sharp lower bound of the first eigenvalue of the sub-Laplacian on a quaternionic contact manifold.
J. Geom. Anal. \textbf{24} (2014), no. 2, 756--778.

\bibitem{IPV4}
Ivanov, S., Petkov, A., Vassilev, D.: The Obata sphere theorems on a quaternionic contact manifold of dimension bigger than seven. Preprint, arXiv:1303.0409

\bibitem{IPV1}
Ivanov, S., Vassilev, D.: An Obata type result for the first eigenvalue of the sub-Laplacian on a CR manifold with a divergence-free torsion.
J. Geom. \textbf{103} (2012), no. 3, 475--504.

\bibitem{IPV5}
Ivanov, S., Vassilev, D.: An Obata-type theorem on a three-dimensional CR manifold.
Glasg. Math. J. \textbf{56} (2014), no. 2, 283--294.

\bibitem{Kaplan}
Kaplan, A.: Fundamental solutions for a class of hypoelliptic PDE generated by composition of quadratic forms.
Trans. Amer. Math. Soc. \textbf{258} (1980), no. 1, 147--153.

\bibitem{Li}
Li, S.-Y., Wang, X.: An Obata-type theorem in CR geometry.
J. Differential Geom. \textbf{95} (2013), no. 3, 483--502.

\bibitem{Li2}
Li, S.-Y., Luk, H.-S.: The sharp lower bound for the first positive eigenvalue of a sub-Laplacian on a pseudo-Hermitian manifold.
Proc. Amer. Math. Soc. \textbf{132} (2004), no. 3, 789--798.

\bibitem{obata}
Obata, M.: Certain conditions for a Riemannian manifold to be isometric with a sphere.
J. Math. Soc. Japan \textbf{14} (1962), 333--340.
\end{thebibliography}
\end{document}